\theoremstyle{definition}
\newtheorem{definition}{Definition}[section]
\newtheorem{example}[definition]{Example}
\newtheorem{remark}[definition]{Remark}
\theoremstyle{theorem}
\newtheorem{lemma}[definition]{Lemma}
\newtheorem{theorem}[definition]{Theorem}
\newtheorem{proposition}[definition]{Proposition}
\newtheorem{corollary}[definition]{Corollary}
\newtheorem{problem}[definition]{Problem}
\def\pmod#1{\ (\mathrm{mod}\ #1)}
\title{Powers and Alternative Laws}
\author{Nicholas Ormes}
\author{Petr Vojt\v{e}chovsk\'y}
\address{Department of Mathematics, University of Denver, 2360 S Gaylord St,
Denver, Colorado, 80208, U.S.A.}
\email[Ormes]{normes@math.du.edu}
\email[Vojt\v{e}chovsk\'y]{petr@math.du.edu}
\keywords{alternative laws, alternative groupoid, powers, dynamical system,
alternative loop, two-sided inverse}
\thanks{Both authors supported by the 2004 PROF Grant of the University of Denver}
\subjclass{Primary: 20N02, Secondary: 20N05, 37E99}
\begin{document}

\begin{abstract}
A groupoid is alternative if it satisfies the alternative laws $x(xy)=(xx)y$
and $x(yy)=(xy)y$. These laws induce four partial maps on $\mathbb{N}^+\times
\mathbb{N}^+$
\begin{equation*}
(r,\,s)\mapsto (2r,\,s-r),\quad (r-s,\,2s),\quad (r/2,\,s+r/2),\quad
(r+s/2,\,s/2),
\end{equation*}
that taken together form a dynamical system. We describe the orbits of this
dynamical system, which allows us to show that $n$th powers in a free
alternative groupoid on one generator are well-defined if and only if $n\le 5
$. We then discuss some number theoretical properties of the orbits, and the
existence of alternative loops without two-sided inverses.
\end{abstract}

\maketitle

\section{Alternative laws and the induced dynamical systems}\label{Sc:Dyn}

Let $G$ be a free groupoid with one generator $x$. The elements of $G$ are
(correctly parenthesized) words built from the single letter $x$. The
\emph{length} $|w|$ of a word $w$ is the number of letters in $w$.

For a positive integer $n$ we denote by $x^n$ any of the words of length $n$ in
$G$. Note that there are precisely $c_{n}$ such words, where $c_{n}$ is the
$n$th \emph{Catalan number} defined by the recursive relation $c_{0}=1$,
$c_{1}=1$, $c_{n+1}=c_{1}c_{n}+c_{2}c_{n-1}+\cdots +c_{n-1}c_{2}+c_{n}c_{1}$,
cf. \cite{LintWilson}.

A groupoid is said to be \emph{left alternative} if it satisfies the left
alternative law $x(xy)=(xx)y$. Dually, it is \emph{right alternative} if it
satisfies the right alternative law $x(yy)=(xy)y$. A groupoid that is both
left alternative and right alternative is called \emph{alternative}. (When
dealing with algebras, the \emph{flexible law} $x(yx)=(xy)x$ is counted
among alternative laws, and hence alternative algebras by definition satisfy
the flexible law in addition to the left and right alternative laws. Our
terminology is common for nonassociative structures with one binary
operation.)

Let $A$ be the free alternative groupoid with generator $x$. Then $A$
consists of equivalence classes of $G$, where two elements of $G$ are
equivalent if and only if they can be obtained from each other by finitely
many applications of the alternative laws. For instance, the equivalence
class of $(xx)(xx)$ consists of all possible powers $x^{4}$, as is
immediately seen from $x((xx)x)=x(x(xx))=(xx)(xx)=((xx)x)x=(x(xx))x$ and from
the fact that $c_{4}=5$. Thus the words of the form $x^{4}$ form an
equivalence class in $A$, i.e., $x^{4}$ is \emph{well-defined}.

The goal of this paper is to determine for which $n>0$ the power $x^{n}$ is
well-defined in $A$, and to investigate related questions. In Sections
\ref{Sc:FlipsAlt} and \ref{Sc:Loop} we turn our attention to alternative loops
without two-sided inverses. The following concept proves useful in all of these
tasks:

Consider a word $w=uv$ in $G$ such that $|u|=r>0$, $|v|=s>0$. Assume that $w$
is transformed into $w^{\prime}=u^{\prime}v^{\prime}$ by a single application
of an alternative law. Then $(|u^{\prime}|,|v^{\prime}|)$ is either $(r,s)$
(when the law is applied inside $u$ or $v$), or $(2r,s-r)$ (when $w=u(ut)$), or
$(r-s,2s)$ (when $w=(tv)v$), or $(r/2,s+r/2)$ (when $w=(tt)v$), or
$(r+s/2,s/2)$ (when $w=u(tt)$). This suggests the introduction of these partial
maps on $\mathbb{N}^+\times \mathbb{N}^+$:
\begin{align*}
&\alpha(r,s)=(2r,s-r),\quad & \beta(r,s)&=(r-s,2s), \\
&\gamma(r,s)=(r/2,s+r/2),\quad & \delta(r,s)&=(r+s/2,s/2).
\end{align*}
Note that $\alpha$ is defined if and only if $s>r$, $\beta$ is defined if and
only if $r>s$, $\gamma$ is defined if and only if $r$ is even, and $\delta$
is defined if and only if $s$ is even. Also note that $\alpha$ is the inverse
of $\gamma$, and $\beta$ is the inverse of $\delta$ (in the sense that
$\alpha\circ\gamma$, $\gamma\circ\alpha$, $\beta\circ\delta$,
$\delta\circ\beta$ are identity maps on their respective domains).

It is not a novel idea to think of partial maps on integers as a dynamical
system---the most notorious example being the dynamical system on
$\mathbb{N}^{+}$ associated with the $3n+1$ problem \cite{3kp1}. In that case
there are two maps
\begin{eqnarray*}
\mu \left( r\right) &=&\frac{r}{2}, \\
\nu \left( r\right) &=&\frac{3r+1}{2},
\end{eqnarray*}
$\mu$ is defined for even $r$, $\nu$ for odd $r$, and the (open) problem is
whether $1$ can be found in the orbit of every $r$.

Just as in the $3n+1$ problem, we are interested in the orbits of the
dynamical system. We define the \emph{orbit} of $(r,s)\in
\mathbb{N}^{+}\times \mathbb{N}^{+}$ as the set
\begin{equation*}
O(r,s)=\{\varphi _{k}\varphi _{k-1}\cdots \varphi _{1}(r,s);\;k\geq
0,\,\varphi _{i}\in \{\alpha ,\,\beta ,\,\gamma ,\,\delta \}\}.
\end{equation*}

Note that although the dynamical system is defined on $\mathbb{N}^{+}\times
\mathbb{N}^{+}$, it is really a union of one-dimensional dynamical systems,
since $a+b=r+s$ for every $(a,b)\in O(r,s)$.

The orbits do not capture the equivalence classes of $G$, of course, but they
provide some information about them. In particular, if
$u_{1}v_{1}=u_{2}v_{2}$ in $A$, then $(|u_{1}|,|v_{1}|)\in
O(|u_{2}|,|v_{2}|)$.

More information about $A$ can be recovered by considering higher dimensional
dynamical systems. For an integer $m>1$, let $T$ denote all binary trees with
$m$ leaves. Let $t\in T$ be one of the trees and $w=u_{1}\cdots u_{m}$ a word
bracketed according to $t$, with $|u_{i}|=r_{i}$. Then the alternative laws
apply to $w$ and produce words bracketed according to some $t^{\prime }\in T$
with subwords of some lengths $r_{1}^{\prime }$, $\dots $, $r_{m}^{\prime }$.
The \emph{orbit} of $(r_{1},\dots ,r_{m})\in (\mathbb{N} ^{+})^{m}$ then
consists of all $m$-tuples $(r_{1}^{\prime },\dots,r_{m}^{\prime })$ obtained
as above from all trees $t\in T$ and all words $w $ bracketed according to
$t$ with subwords of lengths $r_{1}$, $\dots $, $r_{m}$.

In full generality, the structures that describe the action of identities on
terms are known as geometry monoids, with which one can associate so-called
syntactical monoids. See \cite{Dehornoy1, Dehornoy2, Dehornoy3}.

When $m=2$, we do not have to worry about all possible bracketings, since
the two top factors are uniquely specified in a given word of $G$. Since we
will need the dynamical systems of dimension $m>2$ only on one occasion
(Lemma \ref{Lm:Dim3}), we do not discuss them here any further.

\section{Orbits}

\label{Sc:Orbits}

We are now going to describe the general shape of any orbit $O(r,s)$. The key
observation is the following:

When $r<s$ then $\alpha $ applies to $(r,s)$ and
\begin{equation*}
\alpha (r,s)=(2r,s-r)=(2r\ \mathrm{mod}\ (r+s),\,2s\ \mathrm{mod}\ (r+s)).
\end{equation*}
When $r>s$ then $\beta $ applies to $(r,s)$ and
\begin{equation*}
\beta (r,s)=(r-s,2s)=(2r\ \mathrm{mod}\ (r+s),\,2s\ \mathrm{mod}\ (r+s)).
\end{equation*}
Hence the two partial maps $\alpha $, $\beta $ can be replaced by a single
partial map $\omega $ on $\mathbb{N}^{+}\times \mathbb{N}^{+}$ given by
\begin{equation*}
\omega (r,s)=(2r\ \mathrm{mod}\ (r+s),\ 2s\ \mathrm{mod}\ (r+s)),
\end{equation*}
defined if and only if $r\neq s$. Moreover, since $\alpha $ is the inverse of
$\gamma $, and $\beta $ is the inverse of $\delta$, $\omega $ is the left
inverse of both $\gamma $ and $\delta $. (This peculiarity arises because
$\gamma$, $\delta $ are not defined everywhere.)

\setlength{\unitlength}{1.28mm}
\begin{figure}[th]
\begin{center}
\input{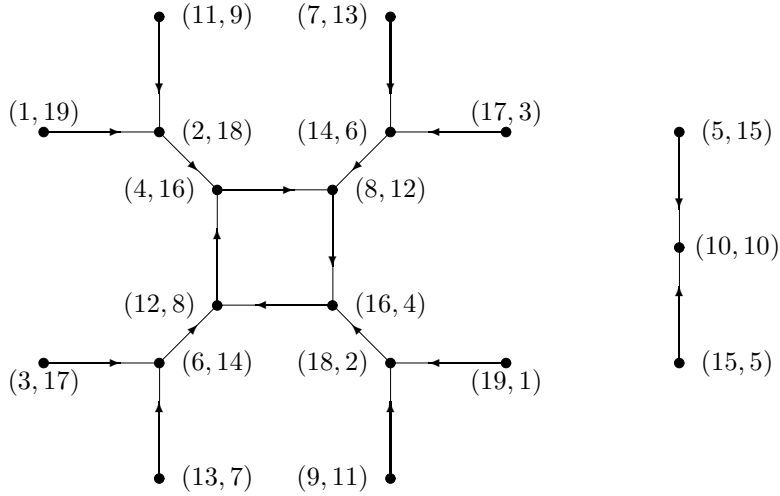}
\end{center}
\caption{Orbits for $n=r+s=20$.} \label{Fg:Orb20}
\end{figure}

Figure \ref{Fg:Orb20} shows all (two) orbits $O(r,s)$ with $r+s=20$. The
arrows in the figure stand for a single application of $\omega $. All
features of orbits that we are going to discuss are already displayed in the
figure.

\begin{lemma}\label{Lm:Omega}
Let $a$, $b$, $r$, $s>0$. Then $(a,b)\in O(r,s)$ if and only if there are
$m$, $n\in \mathbb N$ such that $\omega ^{m}(a,b)=\omega ^{n}(r,s)$.
\end{lemma}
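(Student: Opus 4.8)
The plan is to read every generator through the single map $\omega$. Where it is defined, $\alpha$ and $\beta$ both coincide with $\omega$ (this is exactly the ``key observation'' preceding the statement), so applying $\alpha$ or $\beta$ is a \emph{forward} $\omega$-step. Dually, since $\omega$ is a left inverse of both $\gamma$ and $\delta$, applying $\gamma$ or $\delta$ to a point $p$ produces a point $q$ with $\omega(q)=p$, i.e.\ a \emph{backward} $\omega$-step. Thus a generating sequence taking $(r,s)$ to $(a,b)$ is nothing but a zigzag of forward and backward $\omega$-steps, and the assertion $\omega^m(a,b)=\omega^n(r,s)$ says precisely that this zigzag can be straightened into a single peak: go forward from $(a,b)$ and forward from $(r,s)$ until the two trajectories meet. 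To organize this I would introduce the relation
\begin{equation*}
(a,b)\equiv(r,s)\quad\Longleftrightarrow\quad \omega^m(a,b)=\omega^n(r,s)\ \text{for some }m,n\in\mathbb N,
\end{equation*}
and prove that $O(r,s)$ is exactly the $\equiv$-class of $(r,s)$; the lemma is then immediate.

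The crux is that $\equiv$ is an equivalence relation, and here the decisive fact is that $\omega$ is \emph{single-valued} (whereas its inverse is two-valued). Reflexivity and symmetry are trivial. For transitivity, suppose $\omega^{m_1}(p,q)=\omega^{n_1}(p',q')$ and $\omega^{m_2}(p',q')=\omega^{n_2}(p'',q'')$. Putting $L=\max(n_1,m_2)$, the point $\omega^{L}(p',q')$ is defined (it is the longer of two defined forward trajectories of $(p',q')$, and if $\omega^{L}$ is defined then so is every $\omega^{j}$ with $j\le L$). Pushing both hypotheses forward to level $L$ gives
\begin{equation*}
\omega^{\,m_1+L-n_1}(p,q)=\omega^{L}(p',q')=\omega^{\,n_2+L-m_2}(p'',q''),
\end{equation*}
so $(p,q)\equiv(p'',q'')$. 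This single computation is exactly the straightening of a valley---a backward step immediately followed by a forward step collapses, because both neighbours are the unique $\omega$-image of the valley bottom---repackaged so that the definedness bookkeeping is transparent.

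It then remains to prove the two inclusions. For $O(r,s)\subseteq[(r,s)]_{\equiv}$ I would induct on the length of the generating sequence: each single generator keeps a point in its own $\equiv$-class, since $\alpha(x)=\beta(x)=\omega(x)$ shows $\varphi(x)\equiv x$ with exponents $(0,1)$, while $\omega(\gamma(x))=\omega(\delta(x))=x$ shows $\varphi(x)\equiv x$ with exponents $(1,0)$; transitivity of $\equiv$ then finishes the induction. For the reverse inclusion $[(r,s)]_{\equiv}\subseteq O(r,s)$, assume $\omega^{m}(a,b)=\omega^{n}(r,s)=:M$. From $(r,s)$ reach $M$ by $n$ forward steps, each legitimately realized by $\alpha$ or $\beta$ (one of them applies precisely because $\omega$ is defined, i.e.\ the two coordinates differ). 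From $M=\omega^{m}(a,b)$ reach $(a,b)$ by reversing the forward chain $(a,b)\to\cdots\to M$: a forward step realized by $\alpha$ (resp.\ $\beta$) is undone by its inverse $\gamma$ (resp.\ $\delta$), which is defined at the image. Concatenating yields a genuine $\{\alpha,\beta,\gamma,\delta\}$-sequence from $(r,s)$ to $(a,b)$, so $(a,b)\in O(r,s)$.

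The step I expect to cause the only real friction is the definedness accounting in transitivity, and equivalently in the reverse inclusion: because $\omega$ is only a partial map---a forward trajectory can die at the point where $r=s$---one must make sure that the common level $L$ and every intermediate iterate actually exist. Choosing $L=\max(n_1,m_2)$ and using that definedness of $\omega^{L}$ forces definedness of all lower iterates resolves this cleanly, and the single-valuedness of $\omega$ does the rest.
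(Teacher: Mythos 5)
Your proof is correct and rests on the same key fact as the paper's: since $\omega$ is single-valued and is a left inverse of both $\gamma$ and $\delta$, a backward step immediately followed by a forward step collapses, so every generating sequence normalizes to forward-then-backward, which is exactly $\omega^m(a,b)=\omega^n(r,s)$. The paper states this cancellation directly for an arbitrary path in the orbit; you package it as transitivity of an equivalence relation and, unlike the paper, also spell out the converse inclusion explicitly, but the argument is essentially the same.
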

\begin{proof}
Every map $\varphi_i$ in the definition of $O(r,s)$ is either $\omega$, or a
right inverse of $\omega$. On any path from $(r,s)$ to $(a,b)$ in the orbit, a
right inverse of $\omega$ cannot be followed by $\omega$, of course. Hence
there is at most one point along the path where $\omega$ is followed by a right
inverse of $\omega$.
\end{proof}

Let $O$ be an orbit and $g>0$ an integer. We say that a point $v=(r,s)\in O$
\emph{has depth} $g$ (or \emph{is at depth} $g$) if $\gcd (r,s)=g$. We also
denote the depth of $v$ by $\gcd (v)$, and define $\gcd (V)=\max_{v\in V}\gcd
(v)$ for any subset $V$ of $O$. In particular, $\gcd (O)$ is the \emph{depth}
of $O$. The subset of $O$ consisting of all $v\in O$ of maximum depth will be
called the \emph{bottom} of $O$, and denoted by $B=B(O)$.

\begin{lemma}\label{Lm:g2g}
Let $r$, $s>0$, $r\neq s$, $r+s=n=2^{a}b$, $b$ odd. Let $g=\gcd
(r,s)=2^{c}d$, $d$ odd, and $(r',s')=\omega (r,s)$. Then $g'=\gcd (r',s')\in
\{g,2g\}$, and $g'=g$ if and only if $a=c$.
\end{lemma}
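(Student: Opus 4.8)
The plan is to strip off the common factor $g$ and reduce everything to a coprime pair, where the behaviour of the $\gcd$ under $\omega$ becomes transparent. First I would write $r=gp$ and $s=gq$ with $\gcd(p,q)=1$ and $p\neq q$, and set $m=p+q$, so that $n=gm$. I would record the elementary consequences of $g=\gcd(r,s)$: since $g\mid r$ and $g\mid s$ we have $g\mid n$, and therefore $2^{c}d\mid 2^{a}b$ with $b,d$ odd forces $c\le a$ (and $d\mid b$).

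Next I would observe that $g$ divides both coordinates of $(r',s')$. Indeed $r'=2r\ \mathrm{mod}\ n$ is of the form $2r-kn$ for some integer $k$, and $g$ divides both $2r$ and $n$, hence $g\mid r'$; similarly $g\mid s'$. Thus $g\mid g'$, and it remains to compute $g'/g=\gcd(r'/g,\,s'/g)$. I set $P=r'/g$ and $Q=s'/g$; reducing $2r,2s$ modulo $n=gm$ and dividing by $g$ is the same as reducing $2p,2q$ modulo $m$, so $P\equiv 2p\pmod m$ and $Q\equiv 2q\pmod m$.

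The crux is then a short $\gcd$ computation. Every point of the orbit has coordinate-sum $n$, so $r'+s'=n=gm$ and hence $P+Q=m$; therefore $\gcd(P,Q)=\gcd(P,P+Q)=\gcd(P,m)$. Because $P\equiv 2p\pmod m$, this equals $\gcd(2p,m)$, and since $\gcd(p,m)=\gcd(p,p+q)=\gcd(p,q)=1$, any common divisor of $2p$ and $m$ is coprime to $p$ and hence divides $2$. Thus $\gcd(P,m)\in\{1,2\}$, equal to $2$ precisely when $m$ is even. This already gives $g'=g\cdot\gcd(2p,m)\in\{g,2g\}$.

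Finally I would translate the parity of $m$ into the valuation condition. Writing $v_{2}(\cdot)$ for the $2$-adic valuation, $n=gm$ gives $a=v_{2}(n)=v_{2}(g)+v_{2}(m)=c+v_{2}(m)$, so $v_{2}(m)=a-c\ge 0$. Hence $m$ is odd if and only if $a=c$, and combining with the previous paragraph, $g'=g$ if and only if $a=c$ (and $g'=2g$ otherwise). The arguments are all elementary; the only place demanding care is the chain $\gcd(P,Q)=\gcd(P,m)=\gcd(2p,m)$, where one must justify that the reduction commutes with dividing by $g$ so that $P\equiv 2p\pmod m$, and the coprimality step that collapses the $\gcd$ to a divisor of $2$. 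The $2$-adic bookkeeping at the end is routine once $c\le a$ is in hand.
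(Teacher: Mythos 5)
Your proof is correct and follows essentially the same route as the paper's: both establish $g\mid g'$ and $g'\mid 2g$ by an elementary gcd computation, and both decide between $g$ and $2g$ according to the parity of $n/g$ (equivalently, whether $a=c$). The only cosmetic difference is that you normalize to the coprime pair $(p,q)=(r/g,s/g)$ and work with the unified formula $\omega(r,s)=(2r\bmod n,\,2s\bmod n)$, whereas the paper argues directly on $(2r,\,s-r)$ after a without-loss-of-generality reduction to $\alpha$.
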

\begin{proof}
Without loss of generality, let $r'=2r$, $s'=s-r$ (i.e., $\omega=\alpha$).
Since $g$ is a common divisor of $r$, $s$, it is also a common divisor of
$r'$, $s'$, and thus $g|g'$. It then follows that $g'\in\{g,2g\}$, because
$\gcd(r,s)=\gcd(r,s')$ and $r'=2r$.

When $a=c$ then $g'\ne 2g$ (equivalently, $g'=g$) because $g'$ is a divisor
of $n=r'+s'$ but $2g$ is not. When $a>c$ then $g'=2g$ because both $r'=2r$
and $s'=n-r'$ are divisible by $2g$.
\end{proof}

Hence the depth increases by the factor of $2$ with every application of
$\omega$ until the bottom is reached.

\begin{corollary}\label{Cr:DiffGcd}
 Let $r$, $s>0$, $(a,b)\in O(r,s)$. Then $\gcd(a,b)/\gcd(r,s) = 2^m$ for some
 $m\in\mathbb Z$.
\end{corollary}

\begin{lemma}\label{Lm:Preimages}
Let $r$, $s>0$, $r+s=n$, and $g=\gcd(r,s)$. Then exactly one of the following
is true:
\begin{enumerate}
\item[(i)] $g$ is odd, $n$ is even, and $(r,s)$ has no $\omega $-preimages,

\item[(ii)] both $g$, $n$ are odd, $(r,s)$ has one $\omega $-preimage
$(r',s')$, and $\gcd(r',s')=g$,

\item[(iii)] $g$ is even and $(r,s)$ has two $\omega $-preimages $(r',s')$,
$(r'',s'')$.
\end{enumerate}
Furthermore, if \emph{(iii)} holds and $g'=\gcd(r',s')=\gcd(r'',s'')$ then
$g=2g'$.
\end{lemma}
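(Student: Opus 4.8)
The plan is to make the preimage structure of $\omega$ completely explicit and then read the three cases off from the parities of $r$ and $s$. Recall that $\omega$ coincides with $\alpha$ on the region $r<s$ and with $\beta$ on the region $r>s$, and that each of these is injective with inverse $\gamma$, $\delta$ respectively (the text already notes that $\omega$ is a left inverse of both $\gamma$ and $\delta$). Consequently any $\omega$-preimage $(p,q)$ of $(r,s)$ satisfies $p\neq q$, and if $p<q$ then $(p,q)=\gamma(r,s)=(r/2,\,s+r/2)$, while if $p>q$ then $(p,q)=\delta(r,s)=(r+s/2,\,s/2)$. Thus the preimages are exactly those of $\gamma(r,s)$ and $\delta(r,s)$ that are defined, i.e. $\gamma(r,s)$ occurs iff $r$ is even and $\delta(r,s)$ occurs iff $s$ is even, with no others. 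Counting preimages therefore reduces to counting how many of $r$, $s$ are even.

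Next I would translate parities into the stated trichotomy using $r+s=n$. The numbers $r$, $s$ have equal parity exactly when $n$ is even. If $n$ is odd, exactly one of $r$, $s$ is even, giving exactly one preimage, and since one of them is odd we get $g=\gcd(r,s)$ odd: this is case (ii). If $n$ is even with both $r$, $s$ odd, then neither $\gamma$ nor $\delta$ is defined, there are no preimages, and $g$ is odd: this is case (i). If $n$ is even with both $r$, $s$ even, then both preimages exist and $g$ is even: this is case (iii). These are mutually exclusive and exhaustive once one observes that $g$ even forces both $r$, $s$ even, hence $n$ even, so case (iii) is exactly the case $g$ even. In case (iii) the two preimages are distinct, since equal first coordinates would give $r/2=r+s/2$, i.e. $r+s=0$.

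For the gcd assertion in case (ii) I would apply Lemma~\ref{Lm:g2g} to the preimage itself. Writing the unique preimage as $(r',s')$, we have $r'+s'=n$ odd, and $\gcd(r',s')$ divides $n$ and is therefore odd. So, taking $(r',s')$ as the starting pair in Lemma~\ref{Lm:g2g}, both the $2$-adic valuation $a$ of $n$ and the valuation $c$ of $\gcd(r',s')$ vanish; hence $a=c$, and the lemma yields $\gcd(r,s)=\gcd(r',s')=g$.

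The main work is the \emph{furthermore} clause, which I would settle by an explicit gcd computation. Write $r=g\rho$, $s=g\sigma$ with $\gcd(\rho,\sigma)=1$, where $g$ is even in case (iii). Factoring $g/2$ out of each preimage gives $\gamma(r,s)=(g/2)(\rho,\,2\sigma+\rho)$ and $\delta(r,s)=(g/2)(2\rho+\sigma,\,\sigma)$, so that $\gcd(\gamma(r,s))=(g/2)\gcd(\rho,2\sigma)=(g/2)\gcd(\rho,2)$ and $\gcd(\delta(r,s))=(g/2)\gcd(\sigma,2)$, using $\gcd(\rho,\sigma)=1$ at both steps. Hence each preimage has depth $g/2$ or $g$ according as $\rho$ (resp.\ $\sigma$) is odd or even. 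Since $\gcd(\rho,\sigma)=1$ forbids $\rho$, $\sigma$ both even, the two preimage-depths coincide exactly when $\rho$, $\sigma$ are both odd, in which case both equal $g/2$, giving $g=2g'$ as claimed. I expect the only genuinely delicate point to be recognizing that the hypothesis $\gcd(r',s')=\gcd(r'',s'')$ singles out this ``both odd'' case rather than holding automatically: the pair $(r,s)=(2,4)$, whose preimages $(1,5)$ and $(4,2)$ have depths $1$ and $2$, shows the two depths can differ, so the hypothesis is indeed needed.
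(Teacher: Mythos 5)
Your proof is correct and follows essentially the same route as the paper: the preimages are identified as $\gamma(r,s)$ and $\delta(r,s)$, and the trichotomy is read off from the parities of $r$ and $s$. The only noteworthy variation is in the \emph{furthermore} clause, where you compute both preimage depths explicitly as $(g/2)\gcd(r/g,2)$ and $(g/2)\gcd(s/g,2)$ (thereby also characterizing exactly when the hypothesis $\gcd(r',s')=\gcd(r'',s'')$ holds), whereas the paper argues more briefly that $g'$ divides both $r/2$ and $s/2$ and invokes Lemma~\ref{Lm:g2g}; both arguments are sound.
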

\begin{proof}
First note that the case $g$ even, $n$ odd cannot occur.

An $\omega $-preimage of $(r,s)$ has two possible forms:
$\gamma(r,s)=(r/2,s+r/2)=(r/2,(s+n)/2)$, or
$\delta(r,s)=(r+s/2,s/2)=((r+n)/2,s/2)$.

When $g$ is odd and $n$ is even then both $r$ and $s$ are odd, so neither
$r/2$ nor $s/2$ are integers, and $(r,s)$ has no $\omega $-preimages.

When $g$ and $n$ are both odd then one of $r$, $s$ is even and the other is
odd. Without loss of generality assume that $r$ is even and $s$ is odd. Then
$r'=r/2$, $s'=(s+n)/2$ are integers, and $(r',s')$ is the unique
$\omega$-preimage of $(r,s)$. Moreover,
$g=\gcd(r,s)=\gcd(r,s+n)=\gcd(r/2,(s+n)/2)$ since $g$ is odd.

Assume that $g$ is even. Then $n$, $r$, $s$ are all even. Hence
$(r',s')=(r/2,(s+n)/2)$, $(r'',s'')=((r+n)/2,s/2)$ are $\omega$-preimages of
$(r,s)$.

Assume further that $g'=\gcd(r',s')=\gcd(r'',s'')$. Then $g'|(r/2)$,
$g'|(s/2)$, and $g=2g'$ follows by Lemma \ref{Lm:g2g}.
\end{proof}

\begin{proposition}
\label{Pr:Bottom} The bottom $B(O)$ of an orbit $O$ is either a single point
or a directed cycle. Moreover, $B(O)$ is a singleton if and only if $(r,r)\in
O$ for some $r$, in which case $B(O)=\{(r,r)\}$.
\end{proposition}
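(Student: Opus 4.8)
The plan is to separate two cases according to whether the orbit $O$ contains a point of the form $(r,r)$, equivalently (since $\omega$ is defined exactly where $r\ne s$) whether $\omega$ is defined on all of $O$. Throughout, write $n=r+s=2^ab$ with $b$ odd; by Corollary~\ref{Cr:DiffGcd} the odd part $d$ of $\gcd(v)$ is the same for every $v\in O$, and each $v$ has depth $2^cd$ with $0\le c\le a$ and $d\mid b$. Since $\gcd(r,s)\le\min(r,s)\le n/2$, a point $(r,r)\in O$ forces $r=n/2$ and hence depth $n/2=2^{a-1}b$, whose odd part is $b$; thus $(r,r)\in O$ happens exactly when $d=b$. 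Note $d=b$ also forces $a\ge 1$ (as $d=b=n$ is impossible), so $(n/2,n/2)$ is a genuine lattice point.

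In the case $d=b$ I would show $B(O)=\{(n/2,n/2)\}$. The largest depth of the form $2^cb$ not exceeding $n/2$ is $2^{a-1}b=n/2$, and it is realized only by $(n/2,n/2)$; moreover, at any other point of $O$ one has $r\ne s$, so $\omega$ is defined and, by Lemma~\ref{Lm:g2g}, strictly increases $c$ (since there $c\le a-2<a$) until this unique deepest point is reached. Hence $(n/2,n/2)\in O$ and it is the entire bottom, giving the singleton case with $B(O)=\{(r,r)\}$ for $r=n/2$.

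In the case $d\ne b$ no point $(r,r)$ occurs, so $\omega$ is everywhere defined on $O$; iterating it raises $c$ up to $c=a$ (Lemma~\ref{Lm:g2g}), so $B=B(O)$ is precisely the set of points of depth $2^ad$, each of which satisfies $r\ne s$. I would then prove $B$ is a single directed $\omega$-cycle in three steps. First, $\omega$ maps $B$ into $B$, because at depth $2^ad$ one has $c=a$ and Lemma~\ref{Lm:g2g} fixes the depth. Second, $\omega|_B$ is injective: $\omega(r,s)=\omega(r',s')$ forces $2r\equiv 2r'\pmod n$, hence $r-r'\in\{0,\pm n/2\}$, and $r-r'=\pm n/2$ is excluded because $2^ad\mid(r-r')$ while $2^a$ does not divide $n/2=2^{a-1}b$; being an injection of a finite set, $\omega|_B$ is a bijection. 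Third, for $p,q\in B$, Lemma~\ref{Lm:Omega} gives $\omega^mp=\omega^nq$ with both sides in $B$, and cancelling with the bijection yields $q=\omega^{m-n}p$ (say $m\ge n$); thus $B$ is one forward orbit of the permutation $\omega|_B$, i.e.\ a single cycle. Finally $\omega$ has no fixed point (as $2r\equiv r\pmod n$ is impossible for $0<r<n$), so the cycle has length at least two and $B$ is not a singleton here.

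Putting the cases together proves the statement: $B(O)$ is a singleton precisely when $d=b$, precisely when $(r,r)\in O$, and then $B(O)=\{(n/2,n/2)\}$. I expect the main obstacle to be the third step of the cyclic case—upgrading ``$\omega|_B$ is a permutation'' to ``$\omega|_B$ is a single cycle''—since it is the one place where the global connectivity of the orbit (Lemma~\ref{Lm:Omega}) must be combined with the local bijectivity on the bottom.
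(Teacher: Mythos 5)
Your proof is correct, and its overall architecture matches the paper's: split on whether $(r,r)\in O$, settle the singleton case by depth maximality, show the bottom is $\omega$-invariant so that forward $\omega$-orbits inside it close up into cycles, and invoke Lemma \ref{Lm:Omega} to glue any two points of $B(O)$ into a single cycle. The one genuine divergence is how the absence of a ``tail'' before the cycle is established. The paper argues that a tail would give some $\omega^k(v)$ two distinct $\omega$-preimages at the same (maximal) depth, contradicting the final clause of Lemma \ref{Lm:Preimages}; you instead prove directly that $\omega$ restricted to the bottom is injective, via $2r\equiv 2r'\pmod n$ together with $2^ad\mid r-r'$. Your route is more elementary in that it bypasses Lemma \ref{Lm:Preimages} entirely, and it has the side benefit of making explicit two points the paper leaves implicit: that in the non-singleton case the cycle really has length at least $2$ (since $\omega$ has no fixed points), and that the singleton case is characterized arithmetically by $d=b$, with $(n/2,n/2)$ shown to actually lie in $O$ rather than merely to dominate in depth when present. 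The paper's version is shorter only because Lemma \ref{Lm:Preimages} has already done the bookkeeping on preimage depths.
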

\begin{proof}
Let $B=B(O)$. If $(r,r)\in O$ then $B=\{(r,r)\}$ because $r=\gcd (r,r)>\gcd
(a,b)$ for any $(a,b)\in O$ with $a\neq b$ (since either $a<r$ or $b<r$).

Assume that $(r,r)\not\in O$. Let $v\in B$. Then Lemma \ref{Lm:g2g} implies
$\omega (v)\in B$. By induction, $C=\{\omega ^{k}(v)$; $k\geq 0\}\subseteq
B$. By finiteness, there is a least $t>0$ such that $\omega ^{t}(v)=\omega
^{k}(v)$ for some $0\leq k<t$. If $k>0$ then $\omega ^{k}\left( v\right) $
has two distinct $\omega $-preimages, $\omega ^{k-1}\left( v\right) $ and
$\omega ^{t-1}\left( v\right) $, both at the same depth, which contradicts
Lemma \ref{Lm:Preimages}. Hence $k=0$ and $C$ is a directed cycle.

Suppose there is $v'\in B\setminus C$, and let $C'=\{\omega^k(v');\;k\ge 0\}$
be the directed cycle determined by $v'$. By Lemma \ref{Lm:Omega}, there are
$m$, $n\in\mathbb N$ such that $\omega^m(v)=\omega^n(v')=v''$. Then $v''\in
C\cap C'$, and $v'\in C$ follows, a contradiction.
\end{proof}

Let us call a rooted tree $T$ an \emph{extended complete binary tree} if $T$
is rooted at $r$ and consists of an edge $rt$ and a complete binary tree
attached to $t$. The \emph{height} of a rooted tree is the length of its
longest branch.

\begin{theorem}[Shape of orbits]
\label{Th:ShapeO} Every orbit $O=O(r,s)$ consists of a directed cycle
$B=B(O)$, possibly degenerated into a point, to which disjoint trees are
attached. If $|B|>1$, there is one tree attached to every node of $B$. If
$|B|=1$, there are two trees attached to the unique node of $B$. Moreover:

\begin{enumerate}
\item[(i)] when $|O|>1$, each tree attached to $B$ is an extended complete
binary tree of height $a$, where $\gcd (B)=2^{a}b$, $b$ odd,

\item[(ii)] $O=B=\{(2^{k}r\ \mathrm{mod}\ (r+s),\,2^{k}s\ \mathrm{mod}\
(r+s));\;k\geq 0\}$ if and only if $r+s$ is odd,

\item[(iii)] $|O|=|B|2^{a}$ if $|B|>1$, and $|O|=2^{a+1}-1$ if $|B|=1$.
\end{enumerate}
\end{theorem}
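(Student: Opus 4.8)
The plan is to read the shape of $O$ off the functional graph of the single partial map $\omega$, using Lemma \ref{Lm:Omega} to identify $O$ with the $\omega$-component of $(r,s)$, Lemma \ref{Lm:g2g} to control how the depth evolves, and Lemma \ref{Lm:Preimages} to count $\omega$-preimages (the in-degrees). First I would record the two facts that drive everything: by Corollary \ref{Cr:DiffGcd} the odd part of the depth is constant on $O$, and by Lemma \ref{Lm:g2g} the map $\omega$ strictly doubles the depth off the bottom while preserving it on the bottom. Hence, writing $\gcd(v)=2^{c}\cdot(\mathrm{odd})$, the number of $\omega$-steps from $v$ down to $B$ is exactly the number of times the $2$-valuation still has to rise, so the whole vertical structure is governed by a single $2$-adic valuation, and the directedness toward $B$ (no returns off the bottom) is immediate.

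Next I would pin down $\gcd(B)$. By Proposition \ref{Pr:Bottom}, $B$ is either a directed cycle or a single point $(r,r)$. In the cycle case $\omega$ maps $B$ to $B$ and so preserves depth there; by the ``if and only if'' in Lemma \ref{Lm:g2g} this forces $v_{2}(\gcd(B))=v_{2}(n)$, giving $\gcd(B)=2^{a}b$ with $a=v_{2}(n)$ and $b$ odd. In the degenerate case $B=\{(r,r)\}$, $\omega$ is undefined at $(r,r)$, the preceding argument does not apply, and instead $a=v_{2}(\gcd(B))=v_{2}(r)=v_{2}(n)-1$. I would isolate this single off-by-one in the $2$-valuation early, since it is what ultimately yields the two different size formulas in (iii).

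With the depth stratification in hand I would build the trees from the preimage data of Lemma \ref{Lm:Preimages}. A node of odd depth (top level, $c=0$) has no $\omega$-preimage when $n$ is even, so it is a leaf; a node of even depth has exactly two preimages, and I would show both lie one level higher, i.e.\ at half the depth: a preimage $u$ of $v$ satisfies $\gcd(v)\in\{\gcd(u),2\gcd(u)\}$ by Lemma \ref{Lm:g2g}, and $\gcd(u)=\gcd(v)$ would force $v_{2}(n)=v_{2}(\gcd(v))$, impossible off the bottom. Thus every interior tree node is binary and all leaves sit at the top level, so each maximal subtree hanging above $B$ is a complete binary tree of height $a-1$; joining it to its bottom root by the single connecting edge gives exactly an extended complete binary tree of height $a$ with $2^{a}$ nodes, proving (i). The one place the two preimages of a node split between ``same depth'' and ``half depth'' is at a genuine cycle node, where one preimage is the cycle predecessor (already at maximal depth) and the other is the off-cycle tree root, giving exactly one tree per cycle node; at the degenerate root $(r,r)$ both preimages have depth $\gcd(B)/2$ and so spawn two trees. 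I expect the main obstacle to be precisely this bottom-preimage bookkeeping: making rigorous that a cycle node's two preimages have \emph{different} depths, which I would extract from the contrapositive of the ``furthermore'' clause of Lemma \ref{Lm:Preimages} together with the fact that the cycle predecessor already realizes the maximal depth $\gcd(B)$.

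Finally I would assemble the counts. For (iii), in the cycle case the $|B|$ trees are disjoint, each with root a cycle node and $2^{a}$ nodes in all, so $|O|=|B|2^{a}$; in the point case the two height-$a$ trees share only the root $(r,r)$, giving $|O|=2\cdot 2^{a}-1=2^{a+1}-1$. Statement (ii) then falls out of the stratification: when $n$ is odd we have $a=0$, every depth is odd, no node has a smaller-depth $\omega$-preimage, hence $O=B$, and iterating $\omega(r,s)=(2r\bmod n,\,2s\bmod n)$ produces the displayed set (each coordinate staying nonzero because $\gcd(2,n)=1$); conversely, if $n$ were even with $|B|>1$, a bottom node of even depth would carry an off-bottom preimage by Lemma \ref{Lm:Preimages}(iii), forcing $O\neq B$, the only exception being the degenerate point $r=s$ with $r$ odd, where $\omega$ is undefined and the displayed $\omega$-formula does not apply.
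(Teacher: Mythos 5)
Your proposal is correct and follows essentially the same route as the paper: it reads the orbit off the functional graph of $\omega$, uses Lemma \ref{Lm:g2g} and the preimage counts of Lemma \ref{Lm:Preimages} (including the contrapositive of its ``furthermore'' clause to separate the cycle predecessor from the tree root) to stratify nodes by the $2$-adic valuation of the depth, and then counts nodes of the resulting complete binary trees. Your write-up merely makes explicit some bookkeeping the paper leaves implicit, such as $v_2(\gcd(B))=v_2(r+s)$ in the cycle case versus $v_2(r+s)-1$ in the degenerate case, and the $r=s$ edge case in part (ii).
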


\begin{proof}
We use Lemmas \ref{Lm:g2g}, \ref{Lm:Preimages} and Proposition
\ref{Pr:Bottom} freely. Suppose that $(r,s)$ is a node at even depth $g$.
Then $(r,s)$ has two $\omega $-preimages. If $(r,s)\in B$ and $|B|>1$ then
exactly one of these $\omega $-preimages is in the cycle $B$, while the other
is at depth $g/2$. If $(r,s)\not\in B$, both $\omega $-preimages of $(r,s)$
are at depth $g/2$. The binary tree arising in this process keeps growing
until the shallowest depth $b$ is reached, where $\gcd (B)=2^{a}b$, $b$ odd.
The rest follows from the fact that a complete binary tree of height $h$ has
$1+2+\cdots +2^{h}=2^{h+1}-1$ nodes.
\end{proof}

Note that Theorem \ref{Th:ShapeO} implies that the shape of the orbit
$O(r,s)$ is determined once the length of the bottom cycle and the highest
power of $2$ dividing $r+s$ are known.

In the following lemma we let $B(r,s)$ to denote the bottom of $O(r,s)$.

\begin{lemma}
\label{Lm:OrbGcd} Let $r$, $s>0$, and let $t$ be a common divisor of $r$,
$s$. Assume $(r,s)\in B(r,s)$. Then $(r/t,s/t)\in B(r/t,s/t)$ and
$B(r,s)=t\cdot B(r/t,s/t)=\{(ta,tb) ;\;(a,b) \in B(r,s)\}$. When $g=\gcd
(r,s)$, then $r/g+s/g$ is odd and $B(r,s)=g\cdot B(r/g,s/g)=g\cdot
O(r/g,s/g)$.
\end{lemma}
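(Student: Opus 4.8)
The cornerstone of the argument will be a scaling equivariance for $\omega$. For any common divisor $t$ of $r$ and $s$, the elementary identity $(ta)\bmod(tm)=t\,(a\bmod m)$ gives
\[
\omega(tr,ts)=\big(2tr\bmod t(r+s),\,2ts\bmod t(r+s)\big)=t\cdot\omega(r,s),
\]
and $\omega(tr,ts)$ is defined exactly when $\omega(r,s)$ is (both fail precisely when $r=s$). Thus $\omega$ commutes with multiplication by $t$, so that $\omega^{k}(r,s)=t\cdot\omega^{k}(r/t,s/t)$ for every $k$. This single observation will let me transport all forward-orbit computations between $O(r,s)$ and $O(r/t,s/t)$, provided I am careful to use genuinely forward orbits so that no $\omega$-preimages sneak in.

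First I would dispose of the degenerate case $r=s$: then Proposition~\ref{Pr:Bottom} gives $B(r,s)=\{(r,r)\}$ and $B(r/t,s/t)=\{(r/t,r/t)\}$, and every claim holds verbatim. So assume $r\neq s$. Since $(r,s)\in B(r,s)$, Proposition~\ref{Pr:Bottom} tells me that $B(r,s)$ is a directed cycle and, as in its proof, equals the forward orbit $\{\omega^{k}(r,s):k\ge 0\}$; in particular $(r,s)$ is $\omega$-periodic. By the equivariance above, $(r/t,s/t)$ is $\omega$-periodic as well, so its forward orbit $\{\omega^{k}(r/t,s/t):k\ge 0\}$ is a finite directed cycle.

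The key remaining point is that an $\omega$-periodic point lies in its own bottom. By Theorem~\ref{Th:ShapeO} the orbit $O(r/t,s/t)$ is a single directed cycle---its bottom $B(r/t,s/t)$---with disjoint trees attached, and by the depth monotonicity of Lemma~\ref{Lm:g2g} the forward orbit of every point eventually enters this cycle. Since $(r/t,s/t)$ is periodic, its forward orbit is itself a finite cycle; as this cycle must meet the unique bottom cycle, the two coincide. Hence $(r/t,s/t)\in B(r/t,s/t)$ and $B(r/t,s/t)=\{\omega^{k}(r/t,s/t):k\ge 0\}$. Applying the equivariance termwise then yields
\[
B(r,s)=\{\omega^{k}(r,s):k\ge 0\}=t\cdot\{\omega^{k}(r/t,s/t):k\ge 0\}=t\cdot B(r/t,s/t).
\]

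Finally, for $t=g=\gcd(r,s)$ I would read off the parity claim from Lemma~\ref{Lm:g2g}. The point $(r/g,s/g)\in B(r/g,s/g)$ has depth $\gcd(r/g,s/g)=1$, so its $2$-adic valuation is $0$; but since a bottom point is fixed in depth by $\omega$, Lemma~\ref{Lm:g2g} forces the $2$-adic valuation of a bottom depth to equal that of the invariant sum $r/g+s/g$. Hence $r/g+s/g$ is odd, and Theorem~\ref{Th:ShapeO}(ii) gives $O(r/g,s/g)=B(r/g,s/g)$, so $B(r,s)=g\cdot B(r/g,s/g)=g\cdot O(r/g,s/g)$. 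I expect the \emph{periodic-implies-bottom} step to require the most care: it rests on combining the uniqueness of the bottom cycle (Proposition~\ref{Pr:Bottom}) with the acyclicity of the attached trees and depth monotonicity (Theorem~\ref{Th:ShapeO}, Lemma~\ref{Lm:g2g}), and on applying the equivariance only to forward orbits.
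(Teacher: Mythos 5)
Your argument is correct in substance and, usefully, it supplies the detail the paper omits, but it reaches the membership claim $(r/t,s/t)\in B(r/t,s/t)$ by a genuinely different route. The paper's own proof is purely arithmetic: writing $r+s=2^ab$ and $\gcd(r,s)=2^cd$ with $b,d$ odd, Lemma \ref{Lm:g2g} shows that a point with $r\ne s$ lies in the bottom if and only if $a=c$, and this criterion is visibly preserved when both coordinates are divided by a common divisor $t$ (since $t\mid\gcd(r,s)\mid r+s$, dividing by $t$ lowers both exponents by the same amount); the set equality $B(r,s)=t\cdot B(r/t,s/t)$ is then ``left to the reader.'' You instead make the scaling equivariance $\omega(tr,ts)=t\cdot\omega(r,s)$ explicit and argue dynamically: $(r,s)\in B(r,s)$ forces $(r,s)$ to be $\omega$-periodic with $B(r,s)$ equal to its forward orbit, periodicity descends to $(r/t,s/t)$, and a periodic point must lie on the bottom because its forward orbit is a cycle that meets (hence, being $\omega$-invariant, equals) the unique bottom cycle. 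Both halves of your argument check out---the ``periodic implies bottom'' step you flag does work, using that the bottom is closed under $\omega$ and that every forward orbit eventually enters it---and the equivariance is exactly what is needed for the set equality, so in effect you have proved the omitted half of the paper's argument and replaced the other half with a structural one that is somewhat longer than the paper's valuation criterion. Your parity derivation (a bottom point of depth $1$ forces, via Lemma \ref{Lm:g2g}, the highest power of $2$ dividing $r/g+s/g$ to be $2^0$) agrees with the paper's.

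One small caution: in the degenerate case $r=s$ you assert that ``every claim holds verbatim,'' but there $g=r$ and $r/g+s/g=2$ is even, so the parity assertion fails. This is really a defect of the lemma as stated rather than of your argument---the paper's own proof also tacitly assumes $r\ne s$, since its criterion ``$(r,s)\in B(r,s)$ iff $a=c$'' is false at $(r,r)$---but you should exclude $r=s$ from the parity claim (or note the exception) rather than declare that every claim survives.
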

\begin{proof}
Let $r+s=2^{a}b$, $b$ odd, $g=\gcd (r,s)=2^{c}d$, $d$ odd. Then $(r,s)\in
B(r,s)$ holds if and only if $a=c$, by Lemma \ref{Lm:g2g}. Hence $(r,s)\in
B(r,s)$ implies $(r/t,s/t)\in B(r/t,s/t)$. We leave the verification of the
equality $B(r,s)=t\cdot B(r/t,s/t)$ to the reader. Finally, $r/g+s/g$ is odd
since $g=2^ad$, $d$ odd, and we are done by Theorem \ref{Th:ShapeO}(ii).
\end{proof}

\section{Complete Orbits and Well-defined Powers}

\label{Sc:Powers}

An orbit $O(r,s)$ is said to be \emph{complete} if $|O(r,s)|=r+s-1$, that is
$O\left( r,s\right) $ contains all pairs $(a,b)$ with $a$, $b>0$ and
$a+b=r+s$. An integer $n$ is said to be \emph{complete} if any (and hence
all) orbits $O(r,s)$ with $r+s=n$ are complete.

Recalling the introduction, we say that $x^n$ in the free alternative
groupoid generated by $x$ is \emph{well-defined}, if the expression $x^n$ is
independent of parentheses. Obviously, if $x^n$ is well-defined, then $n$
must be complete.

For a prime $p$, let $\mathrm{GF}(p)$ be the field of order $p$. Recall that
$a\in \mathrm{GF}(p)$ is a \emph{primitive element} of $\mathrm{GF}(p)$ if it
generates the multiplicative group $\mathrm{GF}(p)\setminus\{0\}$.

\begin{proposition}
\label{Pr:CompleteOrbits} An integer $n>0$ is complete if and only if either
$n=2^m$ or $n=p$ is an odd prime and $2$ is a primitive element of
$\mathrm{GF}(p)$.
\end{proposition}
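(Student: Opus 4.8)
The plan is to translate completeness of $n$ into a statement about the orbit structure on the fibre where $r+s=n$ is fixed. On this fibre $\{(a,n-a);\,1\le a\le n-1\}$ the map $\omega$ acts as $(a,n-a)\mapsto(2a\bmod n,\,n-(2a\bmod n))$, so the whole system is carried by the doubling map $a\mapsto 2a\bmod n$ on $\{1,\dots,n-1\}$. As there are exactly $n-1$ such pairs, $n$ is complete precisely when they form a single orbit $O$ with $|O|=n-1$. I would prove the two implications separately.

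For the forward direction, take first $n=2^m$. The pair $(2^{m-1},2^{m-1})$ has $r=s$, so by Proposition \ref{Pr:Bottom} the orbit through it has singleton bottom $B=\{(2^{m-1},2^{m-1})\}$ with $\gcd(B)=2^{m-1}$. Theorem \ref{Th:ShapeO}(iii) then gives $|O|=2^{(m-1)+1}-1=2^m-1=n-1$, so this orbit already exhausts all pairs; it is therefore the unique orbit and $n$ is complete. Next take $n=p$ an odd prime with $2$ primitive modulo $p$. Here $n$ is odd, so by Theorem \ref{Th:ShapeO}(ii) every orbit equals its bottom cycle $\{(2^k r\bmod p,\,2^k s\bmod p);\,k\ge0\}$. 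Since each $r\in\{1,\dots,p-1\}$ is a unit, the smallest period $t$ satisfies $2^t\equiv 1\pmod p$, so the cycle length equals $\mathrm{ord}_p(2)=p-1$ by primitivity, and again $|O|=n-1$.

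For the converse I constrain $n$ through gcd's. If $n$ is complete, all pairs lie in one orbit $O$; choosing the depth-$1$ base point $(1,n-1)$, Corollary \ref{Cr:DiffGcd} forces $\gcd(a,b)$ to be a nonnegative power of $2$ for every $(a,b)\in O$. Since $\{\gcd(a,n-a);\,1\le a\le n-1\}$ is exactly the set of proper divisors of $n$, every proper divisor of $n$ is a power of $2$; in particular the only odd proper divisor is $1$. If $n$ is odd this already forces $n$ to be prime. If $n$ is even, write $n=2^ab$ with $b$ odd and $a\ge1$; then $b<n$ is an odd proper divisor, so $b=1$ and $n=2^a$. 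Hence $n=2^m$ or $n=p$ is an odd prime, and in the latter case the bottom-cycle length computed above equals $\mathrm{ord}_p(2)$, so completeness $|O|=p-1$ holds if and only if $2$ is primitive modulo $p$.

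The routine parts are the two structure-theorem applications. The main obstacle is the converse's reduction step: one must use that ``complete'' means a single orbit filling the entire fibre, and apply Corollary \ref{Cr:DiffGcd} with a depth-$1$ base point so that the conclusion is ``power of $2$'' rather than merely ``$2$-power multiple''. The prime subcase then hinges on correctly identifying the cycle length with $\mathrm{ord}_p(2)$.
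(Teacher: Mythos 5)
Your proof is correct and takes essentially the same route as the paper's: Theorem \ref{Th:ShapeO}(iii) for $n=2^m$, Theorem \ref{Th:ShapeO}(ii) together with $\mathrm{ord}_p(2)$ for odd primes, and Corollary \ref{Cr:DiffGcd} for the converse. The only cosmetic differences are that you start the $2^m$ case at the bottom point $(2^{m-1},2^{m-1})$ rather than at $(1,n-1)$, and you phrase the converse via ``every proper divisor of $n$ is a power of $2$'' instead of exhibiting the single witness pair $(p,n-p)$; both are sound.
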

\begin{proof}
Assume that $n=2^m$. Consider $(1,n-1)\in O(1,n-1)=O$. Since
$\omega^{m-1}(1,n-1)=(2^{m-1},2^{m-1})$, we have
$B(O)=\{(2^{m-1},2^{m-1})\}$. Then $|O|=2^m-1$ by Theorem
\ref{Th:ShapeO}(iii), showing that $n$ is complete.

If $n=p$ is an odd prime then $n$ is complete if and only if $2$ is a
primitive element of $\mathrm{GF}(p)$, by Theorem \ref{Th:ShapeO}(ii).

Assume that $n$ is not an odd prime and $n\neq 2^{m}$. Let $p$ be an odd
prime (properly) dividing $n$. Then $\gcd(1,n-1)=1$ and $\gcd(p,n-p)=p$, and
thus $(1,n-1)$, $(p,n-p)$ cannot be in the same orbit, by Corollary
\ref{Cr:DiffGcd}. This means that $n$ is not complete.
\end{proof}

\begin{lemma}\label{Lm:3Orbits}
Suppose that $n>pq$ is divisible by $pq$, where $p$, $q$
are odd primes, not necessarily distinct. Then there are at least three
distinct orbits $O(r,s)$ with $r+s=n$.
\end{lemma}
\begin{proof}
Use the elements $(1,n-1)$, $(p,n-p)$ and $(pq,n-pq)$ with Corollary
\ref{Cr:DiffGcd}.
\end{proof}

\begin{lemma}\label{Lm:5}
If $1\le n\le 5$, the power $x^n$ is well-defined in the free alternative
groupoid generated by $x$.
\end{lemma}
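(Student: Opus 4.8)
The plan is to induct on $n$, using at each stage the well-definedness of the lower powers $x^1,\dots,x^{n-1}$. The cases $n=1$ and $n=2$ are vacuous, since there is a unique word of each length ($c_1=c_2=1$). For $n=3$ there are $c_3=2$ words, $x(xx)$ and $(xx)x$, and they are identified by a single application of the right alternative law $x(yy)=(xy)y$ with $y=x$. The case $n=4$ ($c_4=5$) is exactly the chain displayed in the introduction. The entire difficulty is concentrated in $n=5$, where $c_5=14$.

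For $n=5$ my main idea is to exploit the already-established well-definedness of $x^2$, $x^3$, $x^4$ to collapse the $14$ words to a handful of representatives. Because the congruence generated by the alternative laws may be applied inside any subword (as noted in the introduction), any length-$5$ word $uv$ with $(|u|,|v|)=(r,5-r)$ is equivalent to the canonical word obtained by replacing $u$ and $v$ by well-defined powers. Thus every length-$5$ word is equivalent to exactly one of the four \emph{top-level types}
\[
x\cdot x^4,\qquad x^2\cdot x^3,\qquad x^3\cdot x^2,\qquad x^4\cdot x,
\]
according to the top split $(r,5-r)\in\{(1,4),(2,3),(3,2),(4,1)\}$. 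It then remains only to show that these four expressions are equal in $A$.

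To connect the four types I would follow the $\omega$-cycle of the orbit $O(1,4)$, which by Proposition~\ref{Pr:CompleteOrbits} and Theorem~\ref{Th:ShapeO}(ii) is the full $4$-cycle $(1,4)\to(2,3)\to(4,1)\to(3,2)\to(1,4)$ (here $5$ is complete since $2$ is primitive mod $5$). Each edge should be realized by one honest application of an alternative law. Concretely: writing $x^4=x\cdot x^3$ and applying $u(uv)=(uu)v$ with $u=x$ turns $x\cdot x^4$ into $x^2\cdot x^3$; writing $x^3=x^2\cdot x$ and applying the same law with $u=x^2$ turns $x^2\cdot x^3$ into $(x^2 x^2)\cdot x=x^4\cdot x$; and writing $x^4=x^3\cdot x$ and applying the right alternative law $(uv)v=u(vv)$ with $v=x$ turns $x^4\cdot x$ into $x^3\cdot x^2$. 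Chaining these gives
\[
x\cdot x^4=x^2\cdot x^3=x^4\cdot x=x^3\cdot x^2,
\]
so all four types, and hence all $14$ words, coincide.

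The step I expect to be the only real obstacle is the realization of each $\omega$-edge as a genuine law application. The orbit machinery guarantees that the top splits are all connected, but a single alternative law applies to a word $uv$ only when it has the special shape $u(ut)$ (for $\alpha$) or $(tv)v$ (for $\beta$), which is a constraint on the actual bracketing and not merely on the pair of lengths. What makes everything go through is precisely that the well-definedness of $x^2$, $x^3$, $x^4$ gives complete freedom to re-bracket each factor; I would use this freedom to present the appropriate factor in the form needed to trigger the law, as in the three transformations above. Once the correct bracketings are chosen, the three equalities are immediate and no further case analysis is required.
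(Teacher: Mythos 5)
Your proposal is correct and follows essentially the same route as the paper: reduce to showing that the four top-level splits $(i,5-i)$, $1\le i\le 4$, are connected, using the well-definedness of $x^2$, $x^3$, $x^4$ to normalize the factors, and then realize the connections by explicit applications of the alternative laws. Your three transformations are exactly the nontrivial steps in the paper's chain $x(x(x(xx)))=(xx)(x(xx))=(xx)((xx)x)=((xx)(xx))x=(((xx)x)x)x=((xx)x)(xx)$.
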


\begin{proof}
There is nothing to prove for $n\le 2$. Any one of the alternative laws shows
that $x^3$ is well-defined. We have shown in the introduction that $x^4 $ is
well-defined.

Since $x^m$ are well-defined for every $m<5$, to prove that $x^5$ is
well-defined it suffices to show that for every $1<i<5$ some word $uv$ with
$|u|=i$ can be obtained from $x(x(x(xx)))$. Now, $x(x(x(xx))=(xx)(x(xx)) =
(xx)((xx)x) = ((xx)(xx))x = (((xx)x)x)x = ((xx)x)(xx)$ does just that.
\end{proof}

Because $6$ is not complete by Proposition \ref{Pr:CompleteOrbits}, $x^6$ is
not well-defined. However, we cannot conclude right away that $x^n$ is not
well-defined for every $n>5$. The catch is that it could happen that the
alternative laws apply to higher powers, say $x^8$, in so many ways that
$x^n$ could be well-defined. The following technical lemma will help us
eliminate such a possibility:

\begin{lemma}
\label{Lm:CompDef} Let $u$, $v$ be words such that $|u|+|v|=n$ and $|u|$ is
odd. Further assume that one of the following two conditions holds:
\begin{enumerate}
\item[(i)] $n$ is odd and not complete,

\item[(ii)] $n$ is even, $n$ is as in Lemma $\ref{Lm:3Orbits}$, $n/2$ is odd.
\end{enumerate}
Then there is a word $w$ of length $n$ such that $u(uv)\ne uw$ in the free
alternating groupoid $A$ generated by $x$. In particular, $x^{n+|u|}$ is not
well-defined in $A$.
\end{lemma}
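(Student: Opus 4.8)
Write $r=|u|$, which is odd, and set $N=n$ so the two competing words $u(uv)$ and $uw$ both have length $r+n$ and both have top-level factorization with left factor of length $r$. The plan is to isolate an \emph{invariant of the right factor} and reduce the lemma to showing this invariant is the same for every word equal to $u(uv)$ whose top-level left factor has length $r$. The invariant I would use is the sum-$n$ orbit of the top-level type of the right factor: for a length-$n$ word $q$ with top-level factorization $q=q_1q_2$, attach to $q$ the orbit $O(|q_1|,|q_2|)$. This is well defined on equivalence classes in $A$ by the two-dimensional fact from Section \ref{Sc:Dyn} (if $q\equiv q'$ then, comparing top-level factorizations, $(|q_1|,|q_2|)\in O(|q_1'|,|q_2'|)$, hence the same orbit). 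Granting the invariance principle, the lemma follows: the right factor of $u(uv)$ is $uv$, whose top-level type is $(r,\,n-r)$ and whose orbit $O(r,n-r)$ has odd depth $\gcd(r,n-r)=\gcd(r,n)$ (odd because $r$ is odd); so it suffices to produce a length-$n$ word $w$ whose invariant differs, and then $uw\ne u(uv)$ in $A$, whence $x^{n+|u|}$ is not well-defined since these are two unequal words of length $n+|u|$.

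Constructing $w$ uses the hypotheses precisely to guarantee a second sum-$n$ orbit of the right depth. In case (i), $n$ is odd and not complete, so by Proposition \ref{Pr:CompleteOrbits} the pairs with sum $n$ fall into at least two orbits; since a well-defined power is complete, $[uv]$ is not all of $A$ restricted to length $n$, and there is a length-$n$ word $w$ with $O(\text{top-type of }w)\ne O(r,n-r)$. In case (ii), $n$ is divisible by $pq$ with $p,q$ odd primes, so Lemma \ref{Lm:3Orbits} gives at least three sum-$n$ orbits, namely the ones meeting $(1,n-1)$, $(p,n-p)$, $(pq,n-pq)$, whose depths have odd parts $1$, a multiple of $p$, and a multiple of $pq$; these are three distinct values, so at least one differs from the odd part of $\gcd(r,n)$, and by Corollary \ref{Cr:DiffGcd} the corresponding orbit is distinct from $O(r,n-r)$. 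I then take $w=w_1w_2$ with $(|w_1|,|w_2|)$ in that orbit. In either case $w$ has the desired distinguishing invariant.

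Proving the invariance principle is the heart of the matter and the step I expect to be hardest. Because $r$ is odd, a left factor of length $r$ is never a literal square, so the top-level move $\gamma$ cannot act while the left length is $r$; in case (i) the right factor has odd length $n$ and so is not a square either, ruling out $\delta$ and leaving only $\alpha$ and $\beta$. The structural input is Lemma \ref{Lm:Preimages}: in case (i) the type $(r,n)$ has \emph{no} $\omega$-preimage, and in case (ii) exactly one. At the word level this means that any sequence of moves leaving top-left-length $r$ and later returning to it must re-enter $(r,n)$ by reversing the unique out-edge $\omega(r,n)$ (in case (ii) there is additionally the single forward preimage edge to account for). I would then run the return analysis: the pre-return word has the forced type $(2r,\,n-r)$ with a square left factor, and reversing the edge recombines its pieces into a new right factor; the delicate point is that this new right factor need not be literally $u\cdot(\text{something}\equiv uv)$, so one must track how the pieces recombine and verify that its top-level type stays in $O(r,n-r)$. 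This bookkeeping across top-level excursions is exactly what the higher-dimensional dynamical system of Section \ref{Sc:Dyn} is built for, and I would carry it out there (cf. Lemma \ref{Lm:Dim3}), using the oddness of $r$ throughout to forbid any net change of the right factor's orbit.
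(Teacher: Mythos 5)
Your strategy is the same as the paper's: pick $w=w_1w_2$ with $(|w_1|,|w_2|)=(t,n-t)$ lying in a sum-$n$ orbit other than $O(r,n-r)$, and block any derivation of $u(uv)=uw$ by analyzing the last step that acts at the top level, using the oddness of $r=|u|$ to restrict how a word with top-level lengths $(r,n)$ can be produced. In case (i) your argument is essentially complete: since $r$ and $n$ are both odd, the only top-level move that can produce top lengths $(r,n)$ is $\gamma$ applied to a word $(tt)v$ with $|t|=r$, and the resulting right factor $tv$ has top type exactly $(r,n-r)$; so the right factor's orbit really is invariant, and your choice of $w$ finishes the proof. (The ``delicate point'' you single out is in fact immediate from the lengths; no higher-dimensional system is needed.)

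In case (ii), however, your invariance principle is false as stated, and this is a genuine gap rather than bookkeeping. When $n$ is even there is a second way to arrive at top lengths $(r,n)$, namely the right alternative law $(u_1z)z\to u_1(zz)$ with $|z|=n/2$; the new right factor $zz$ has top type $(n/2,n/2)$, which in general does \emph{not} lie in $O(r,n-r)$ (its gcd has odd part $n/2$, not the odd part of $\gcd(r,n)$). You flag this as ``the single forward preimage edge to account for'' but never account for it, and your choice of $t$ does not exclude it: for $n=2pq$ the orbit $O(pq,n-pq)$ your depth criterion might select is precisely the singleton $\{(n/2,n/2)\}$. The tell is that you never use the hypothesis that $n/2$ is odd. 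The repair, which is what the paper does, is to use Lemma \ref{Lm:3Orbits} to choose $t$ with $t\ne n/2$ in addition to $(t,n-t)\notin O(r,n-r)$, and then to observe that $n/2$ odd forces $|O(n/2,n/2)|=1$ by Theorem \ref{Th:ShapeO}, so a right factor of the form $zz$ can never be transformed into $w_1w_2$. With that amendment your proof coincides with the paper's.
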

\begin{proof}
Let $|u|=r$, $|v|=s$, $r+s=n$. Since $n$ is not complete, there exists
$0<t<n$ such that $(r,s)\not\in O(t,n-t)$. Should we assume (ii), we can
further demand that $t\ne n/2$, by Lemma \ref{Lm:3Orbits}.

Let $w$ be any word of the form $w_1w_2$ where $|w_1|=t$, $|w_2|=n-t$. If
$u(uv)=u(w_1w_2)$, then any proof of this fact must involve some of the
letters in the left-most $u$, because $(r,s)\not\in O(t,n-t)$, and thus
$uv\ne w_1w_2$. Since such a proof terminates in $u(w_1w_2)$, there is a step
in the proof when the word becomes $u_1u_2$, $|u_1|=r$, and such that all
additional steps are performed inside $u_1$ or $u_2$. We claim that such a
step is either impossible, or yields $u_2$ that cannot be transformed to
$w_1w_2$.

How could the word $u_1u_2$ be produced? Assume it is produced by the left
alternative law. Then the step is either $y(yu_2)=(yy)u_2=u_1u_2$,
contradicting $|u|=|u_1|$ odd, or it is $(u_1u_1)z = u_1(u_1z)=u_1u_2$, in
which case $u_2$ cannot be transformed to $w_1w_2$ because $(r,s)\not\in
O(t,n-t)$. Now assume that the step in question is produced by the right
alternative law. When the step is $y(u_2u_2)=(yu_2)u_2=u_1u_2$ we reach a
contradiction as $|u_1|=r<|u_2|=n$. Assume the step is $(u_1z)z = u_1(zz) =
u_1u_2$. This is clearly impossible when $n$ is odd. Otherwise (ii) is
assumed, and hence $t=|w_1|\ne n/2$. But $u_2=zz$, $|z|=n/2 $, $n/2$ is odd
and $|O(n/2,n/2)|=1$, so $u_2$ cannot be transformed into $w_1w_2$.
\end{proof}

\begin{lemma}
The power $x^n$ is well-defined in the free alternative groupoid generated by
$x$ if and only if $n\le 5$, except possibly $n=11$, $n=13$.
\end{lemma}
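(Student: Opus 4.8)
The plan is to prove the two implications separately. The forward implication is immediate: for $n\le 5$ the power $x^n$ is well-defined by Lemma \ref{Lm:5}. For the converse I must show that $x^N$ is \emph{not} well-defined whenever $N>5$ and $N\notin\{11,13\}$, and I would organize the argument according to whether $N$ is complete. If $N$ is not complete, then $x^N$ is not well-defined, since a well-defined power forces $N$ to be complete. Thus the real work is the complete case, where by Proposition \ref{Pr:CompleteOrbits} either $N=2^m$ or $N=p$ is an odd prime with $2$ a primitive root of $\mathrm{GF}(p)$. In either subcase the tool is Lemma \ref{Lm:CompDef}: to conclude that $x^N$ is not well-defined I would exhibit a splitting $N=n+r$ with $r$ odd and with $n$ satisfying hypothesis (i) or (ii) of that lemma, and then take words $u$, $v$ with $|u|=r$ and $|u|+|v|=n$.

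For $N=2^m$ with $m\ge 3$, I would take $r=1$ and $n=2^m-1$, which is odd. The point is that $n=2^m-1$ is never complete for $m\ge 3$: being odd and larger than $1$ it is not a power of $2$, and if it were a prime $p$ then $2^m\equiv 1\pmod p$ would give $\mathrm{ord}_p(2)\mid m$, hence $\mathrm{ord}_p(2)\le m<2^m-2=p-1$, so $2$ could not be a primitive root. Hypothesis (i) of Lemma \ref{Lm:CompDef} therefore applies and $x^{2^m}$ is not well-defined.

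For $N=p$ an odd prime, any splitting $p=n+r$ with $r$ odd forces $n$ even, so I would target hypothesis (ii): $n$ even, $n/2$ odd, and $n$ as in Lemma \ref{Lm:3Orbits}. Writing $n=2m$, these amount to choosing $m$ odd and composite (any two odd prime factors $p'$, $q'$ of $m$ then satisfy $p'q'\mid n$ and $n>p'q'$), while positivity of $r=p-2m$ and of $|v|=4m-p$ becomes $p/4<m<p/2$. The prime case thus reduces to the number-theoretic statement that $(p/4,p/2)$ contains an odd composite integer. I would prove this for every $p\ge 19$: for $p\ge 25$ the interval has length $p/4>6$ and so meets the residue class $3\pmod 6$, giving an odd multiple of $3$ that exceeds $6$ and is therefore composite; for $19\le p\le 24$ one checks directly that $9\in(p/4,p/2)$. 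With Lemma \ref{Lm:3Orbits} supplying the divisibility required by hypothesis (ii), Lemma \ref{Lm:CompDef} then yields that $x^p$ is not well-defined for every complete prime $p\ge 19$.

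It remains to see that these cases cover all $N>5$ with $N\notin\{11,13\}$ and to locate the genuine exceptions. The complete primes exceeding $5$ are $11$, $13$, $19$, $29,\dots$, so once $11$ and $13$ are set aside every complete prime is $\ge 19$ and is handled above, while every complete $N$ that is a power of $2$ is $\ge 8$ and is handled above; all remaining $N$ are incomplete. Finally, the same interval analysis shows that $(p/4,p/2)$ fails to contain an odd composite exactly for the primes $p\in\{7,11,13,17\}$; of these $7$ and $17$ are incomplete (for them $2$ is not primitive) and so are already dispatched, leaving precisely $11$ and $13$ as the cases this construction cannot reach. The main obstacle is this prime case: correctly converting hypothesis (ii) of Lemma \ref{Lm:CompDef} into the condition that $m$ be odd and composite with $p/4<m<p/2$, and then verifying that the construction succeeds for all complete primes except exactly $11$ and $13$.
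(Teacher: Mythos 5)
Your proof is correct, and it uses the same essential machinery as the paper---Lemma \ref{Lm:5} for the positive direction, and Lemma \ref{Lm:CompDef} (fed by Proposition \ref{Pr:CompleteOrbits} and Lemma \ref{Lm:3Orbits}) for the negative one---but it organizes the negative direction differently. The paper does not reduce to the complete case first: it disposes of powers of two by splitting $2^m$ as $k+|u|$ with $k$ the largest odd multiple of $3$ below $2^m$ (so $|u|\in\{1,5\}$), and then covers \emph{all} odd $n\ge 19$ by applying hypothesis (ii) of Lemma \ref{Lm:CompDef} to the fixed even numbers $18,30,54,90,\dots$, whose intervals $(k,2k)$ tile the odd integers from $19$ on; the stragglers $7,9,15,17$ are killed by incompleteness. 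You instead invoke completeness up front, which shrinks the odd case to the complete primes, and then tailor the auxiliary even number $n=2m$ to each such prime by finding an odd composite $m\in(p/4,p/2)$; your uniform choice $n=2^m-1$ for powers of two (justified by the Mersenne-order observation $\mathrm{ord}_p(2)\le m<p-1$) also absorbs the paper's separate treatment of $x^8$. What your route buys is a structural explanation of the exceptional set: $11$ and $13$ are precisely the complete primes whose interval $(p/4,p/2)$ contains no odd composite, whereas in the paper they simply fall through the cracks of the chosen covering. What the paper's route buys is less number theory---no classification of complete primes below $19$ and no interval-existence argument, just a fixed list of even numbers satisfying hypothesis (ii). Both arguments are sound and both correctly leave exactly $n=11,13$ unresolved at this stage.
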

\begin{proof}
Because $7$ is not complete, Lemma \ref{Lm:CompDef} implies that $x^8$ is not
well-defined. Let $n=2^m>8$, and let $k$ be the largest odd multiple of $3$
smaller than $n$. Note that either $n=k+1$ or $n=k+5$, and thus $k>3$,
$n<2k$. Since $k$ is not complete by Proposition \ref{Pr:CompleteOrbits},
$x^n $ is not well-defined by Lemma \ref{Lm:CompDef}. Any even $n$ that is
not a power of $2$ is not complete, by Proposition \ref{Pr:CompleteOrbits},
and we have therefore shown that $x^n$ is not well-defined for any even
$n>4$.

Now assume that $n>5$ is odd. All odd multiples of $18$ satisfy assumption
(ii) of Lemma \ref{Lm:CompDef}. The Lemma therefore implies that $x^n$ is not
well-defined for any odd $n$ between $18$ and $36$, between $3\cdot 18=54$
and $108$, between $5\cdot 18=90$ and $180$, etc. Moreover, $30$ satisfies
assumption (ii) of Lemma \ref{Lm:CompDef}, too, and thus $x^n$ is not
well-defined for any odd $n$ between $30$ and $60$. Since none of $7$, $9$,
$15$, $17$ is complete, by Proposition \ref{Pr:CompleteOrbits}, we are
through.
\end{proof}

A more subtle argument (a higher dimensional dynamical system) is needed to
eliminate the possibility that $x^{11}$ or $x^{13}$ is well-defined:

\begin{lemma}
\label{Lm:Dim3} Let $A$ be the free alternative groupoid generated by $x$.
Then $x^3x^5\ne x^4x^4$ in $A$. Consequently, $x^{11}$, $x^{13}$ are not
well-defined in $A$.
\end{lemma}
\begin{proof}
Since $3$, $5$ are odd, we can only transform $x^3x^5$ into $(x^3x^3)x^2$.
Because $|O(3,3)|=1$, we can either return to $x^3(x^3x^2)$, or proceed to
$((x^3x^3)x)x$, from which point we cannot proceed any further. The powers we
have reached are of the form $x^3x^5$, $x^6x^2$, $x^7x$, but not $x^4x^4$.

Note that $(x^3x^3)x^5$ can only be transformed to $x^3(x^3x^5)$. Since
$x^3x^5\ne x^4x^4$, the expression $x^3(x^3x^5)$ can only be transformed to
$x^3x^8$ or to $(x^3x^3)x^5$. Thus $x^{11}$ is not well-defined.

Similarly, $x^3(x^5x^5)$ can only be transformed to $(x^3x^5)x^5$ (because
$|O(5,5)|=1$). Since $x^3x^5\ne x^4x^4$, we conclude that $x^{13}$ is not
well-defined either.
\end{proof}

\begin{corollary}
The power $x^n$ is well-defined in a free alternative groupoid generated by
$x$ if and only if $n\le 5$.
\end{corollary}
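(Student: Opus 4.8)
The plan is simply to assemble the three immediately preceding results, since all the substantive work has already been done. The statement is a biconditional, so I would split it into its two implications and dispatch each by citation.

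For the reverse direction—if $n\le 5$ then $x^n$ is well-defined—nothing new is required: this is exactly the content of Lemma \ref{Lm:5}, which establishes well-definedness for $n=1,2,3,4,5$ directly.

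For the forward direction—if $x^n$ is well-defined then $n\le 5$—I would argue the contrapositive, assuming $n>5$ and showing that $x^n$ is not well-defined. The unlabeled lemma preceding Lemma \ref{Lm:Dim3} already proves that for every $n>5$ with the sole possible exceptions $n=11$ and $n=13$, the power $x^n$ fails to be well-defined. The two stubborn values are then eliminated by Lemma \ref{Lm:Dim3}, which shows precisely that $x^{11}$ and $x^{13}$ are not well-defined. Since these two cases together with the lemma exhaust all $n>5$, the implication follows.

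There is no genuine obstacle at this stage; the corollary is a bookkeeping statement that merely collects a completed case analysis. The real difficulty was distributed among the earlier results—the completeness criterion of Proposition \ref{Pr:CompleteOrbits}, the reduction in Lemma \ref{Lm:CompDef} that rules out every even $n>4$ and every odd $n>5$ outside the two exceptional primes, and the three-dimensional argument of Lemma \ref{Lm:Dim3} required to handle $n=11$ and $n=13$. Once those are in place, the corollary is immediate, and I would expect its proof to be a single sentence combining the citations.
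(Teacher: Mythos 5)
Your proposal is correct and matches the paper exactly: the corollary is stated without proof there precisely because it follows immediately by combining Lemma \ref{Lm:5}, the unlabeled lemma preceding Lemma \ref{Lm:Dim3}, and Lemma \ref{Lm:Dim3} itself, just as you describe.
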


\section{Flips}\label{Sc:Flips}

For $r$, $s>0$, we say that $(r,s)$ \emph{flips} if $(s,r)\in O(r,s)$.
Whether or not the element $(1,r)$ flips is related to the existence of
two-sided inverses in alternative loops. We explain this in more detail in
the next section. For the time being, we can think of flipping as a concept
related to well-defined powers.

Clearly, if $r+s$ is complete or if $r=s$ then $(r,s)$ flips. We first
investigate flips for $r+s$ odd. The situation is more transparent in the odd
case then in the even case thanks to Theorem \ref{Th:ShapeO}(ii). The even
case is handled by Proposition \ref{Pr:EvenF}.

The suspected connection to number theory reveals itself in the following
result:

\begin{proposition}
\label{Pr:FNT} Assume that $r$, $s>0$ and $r+s$ is odd. Then $(r,s)$ flips if
and only if there is $k\geq 0$ such that $2^{k}r\equiv -r\ (\mathrm{mod}\
r+s)$. If further $\gcd (r,s)=1$, then $(r,s)$ flips if and only if there is
$k\ge 0$ such that $2^{k}\equiv -1\ (\mathrm{mod}\ r+s)$.
\end{proposition}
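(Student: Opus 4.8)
The plan is to translate the flip condition into a single modular congruence using the explicit shape of orbits with odd index sum. Because $r+s$ is odd, Theorem~\ref{Th:ShapeO}(ii) tells us that $O(r,s)=B(r,s)=\{(2^{k}r\ \mathrm{mod}\ n,\,2^{k}s\ \mathrm{mod}\ n);\ k\ge 0\}$, where $n=r+s$. Hence $(r,s)$ flips if and only if $(s,r)$ lies in this set, i.e., if and only if there is some $k\ge 0$ with
\[
(s,r)=(2^{k}r\ \mathrm{mod}\ n,\ 2^{k}s\ \mathrm{mod}\ n).
\]

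First I would reduce this to matching a single coordinate. Every point $(a,b)\in O(r,s)$ satisfies $a+b=n$, so $(2^{k}r\ \mathrm{mod}\ n)+(2^{k}s\ \mathrm{mod}\ n)=n$; thus once $2^{k}r\ \mathrm{mod}\ n=s$ holds, the second coordinate is automatically $n-s=r$. (It is worth recording that neither residue vanishes: since $n$ is odd, $2^{k}$ is invertible modulo $n$, and $0<r,s<n$ rules out $n\mid 2^{k}r$ and $n\mid 2^{k}s$.) The flip condition therefore collapses to the single congruence $2^{k}r\equiv s\pmod{n}$, and since $s=n-r\equiv -r\pmod{n}$, this is exactly $2^{k}r\equiv -r\pmod{n}$. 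This establishes the first equivalence.

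For the refined statement I would use that $\gcd(r,s)=1$ is equivalent to $\gcd(r,n)=\gcd(r,r+s)=1$, so that $r$ is a unit modulo $n$. Multiplying $2^{k}r\equiv -r\pmod{n}$ by $r^{-1}$ gives $2^{k}\equiv -1\pmod{n}$, and the converse is immediate, yielding the stated criterion.

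I expect no serious obstacle: once Theorem~\ref{Th:ShapeO}(ii) is invoked the argument is pure residue bookkeeping. The one point deserving care is the passage from matching the full pair $(s,r)$ to matching just one coordinate. This rests on the invariant $a+b=n$ together with the nonvanishing of the two residues; skipping it would leave the apparently separate condition $2^{k}s\equiv -s\pmod{n}$, which one must then check coincides with $2^{k}r\equiv -r\pmod{n}$ (it does, upon substituting $s\equiv -r$).
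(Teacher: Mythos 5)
Your proof is correct and follows essentially the same route as the paper's: both invoke Theorem \ref{Th:ShapeO}(ii), reduce the two coordinate conditions to a single congruence using the invariance of the coordinate sum modulo $r+s$, substitute $s\equiv -r$, and cancel $r$ when $\gcd(r,s)=1$. The extra care you take about the nonvanishing of the residues is a minor refinement the paper leaves implicit.
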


\begin{proof}
All congruences in this proof are modulo $r+s$. By Theorem
\ref{Th:ShapeO}(ii), $(r,s)$ flips if and only if there is $k\ge 0$ such that
$s\equiv 2^kr$ , $r\equiv 2^ks$. Since $r+s\equiv 2^k(r+s)$, we see that the
above two congruences hold if and only if at least one of them holds, say
$s\equiv 2^kr $. As $s\equiv -r$, $(r,s)$ flips if and only if $2^kr\equiv
-r$.

Assume that $\gcd (r,s)=1$. Then $\gcd (r,r+s)=1$, too, and the last
congruence is therefore equivalent to $2^{k}\equiv -1$.
\end{proof}

When $\gcd(r,s)\ne 1$, the situation can be reduced to the relatively prime
case (see Proposition \ref{Pr:EvenF}). We are thus interested in solutions $k$
to the congruence
\begin{equation}\label{Eq:Congr}
2^{k}\equiv -1\ (\mathrm{mod}\ n).
\end{equation}
Of course, \eqref{Eq:Congr} has no solution when $n$ is even. When $n$ is
odd, the behavior of \eqref{Eq:Congr} appears to be a difficult number
theoretic question, related to the classical problem whether $2$ is a
primitive element modulo $n$. We do not fully understand for which values of
$n$ the congruence \eqref{Eq:Congr} has a solution. Nevertheless, based on
the prime factorization of $n$, we can identify many values of $n$ for which
there is no solution, and others for which there is a solution.

\begin{lemma}\label{Lm:Order}
Suppose $p$ is an odd prime. The congruence $2^{k}\equiv -1 \pmod p$ has a
solution if and only if the multiplicative order of $2$ in $\mathrm{GF}(p)$
is even.
\end{lemma}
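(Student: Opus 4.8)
The plan is to recast the congruence as a membership question in the cyclic group $\mathrm{GF}(p)^{*}$ and then reduce it to the elementary behaviour of elements of order $2$ in a cyclic group. First I would recall that $\mathrm{GF}(p)^{*}$ is cyclic of order $p-1$, which is even because $p$ is odd. A cyclic group of even order has a \emph{unique} element of order $2$; since $(-1)^{2}=1$ and $-1\neq 1$ in $\mathrm{GF}(p)$, that unique involution is precisely $-1$. The congruence $2^{k}\equiv -1\pmod p$ has a solution if and only if $-1$ lies in the cyclic subgroup $\langle 2\rangle$, whose order is the multiplicative order $d=\mathrm{ord}_{p}(2)$.

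For the forward implication I would assume $d$ is even. Then $\langle 2\rangle$, being cyclic of even order $d$, contains an element of order $2$, namely $2^{d/2}$. Since $\langle 2\rangle\subseteq \mathrm{GF}(p)^{*}$ and $-1$ is the unique involution of the ambient group, we must have $2^{d/2}=-1$, which exhibits a solution with $k=d/2$.

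For the converse, suppose $2^{k}\equiv -1$. Squaring gives $2^{2k}\equiv 1$, so $d\mid 2k$; on the other hand $2^{k}\equiv -1\neq 1$ forces $d\nmid k$. Were $d$ odd, then $\gcd(d,2)=1$ together with $d\mid 2k$ would give $d\mid k$, a contradiction. Hence $d$ is even, completing the equivalence.

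I do not anticipate a genuine obstacle here; the single conceptual point is the observation that $-1$ is the unique element of order $2$ in the cyclic group $\mathrm{GF}(p)^{*}$. Once that is in place, the statement collapses to the standard fact that a cyclic group possesses an element of order $2$ if and only if its order is even, and both directions follow by the short divisibility arguments above.
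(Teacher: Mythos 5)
Your proof is correct and follows essentially the same route as the paper's: both directions come down to exhibiting $2^{d/2}$ as a square root of $1$ other than $1$ (hence equal to $-1$) when the order $d$ is even, and deriving a parity contradiction from $2^{k}\equiv -1$ when $d$ is odd. The only real difference is that you justify the uniqueness of the involution $-1$ by appealing to the cyclicity of $\mathrm{GF}(p)^{*}$, whereas the paper obtains the same point more economically from the factorization $2^{m}-1\equiv(2^{m/2}-1)(2^{m/2}+1)\pmod p$ together with the absence of zero divisors, so it never needs the existence of a primitive root.
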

\begin{proof}
All congruences in this proof are modulo $p$. Assume that the order of $2$ is
an odd number $m$ and that $2^{k}\equiv -1 \pmod p$. Then we have a
contradiction via $1\equiv 1 ^{k}\equiv \left( 2^{m}\right) ^{k}\equiv \left(
2^{k}\right) ^{m}\equiv \left( -1\right) ^{m}\equiv -1$.

On the other hand, if the order of $2$ is an even number $m$, then we have
$2^{m}-1 \equiv \left( 2^{m/2}-1\right) \left( 2^{m/2}+1\right) \equiv 0$,
which implies $2^{m/2}\equiv 1$ or $2^{m/2}\equiv -1$, the former
contradicting the fact that $2$ is of order $m$.
\end{proof}

Many of our results are based on quadratic residues. We recall some of the
relevant definitions and results from elementary number theory. (See
\cite{Burton}.)

Let $p$ be an odd prime and $\gcd (a,p)=1$. Then $a$ is said to be a
\emph{quadratic residue modulo $p$} if the congruence $x^{2}\equiv a\
(\mathrm{mod}\ p) $ has a solution. The \emph{Legendre symbol} $(a/p)$ is then
defined by
\begin{equation*}
(a/p)=\left\{
\begin{array}{rl}
1, & \text{if $a$ is a quadratic residue of $p$}, \\
-1, & \text{otherwise}.
\end{array}
\right.
\end{equation*}

\begin{lemma}[Euler's Criterion]
Let $p$ be an odd prime and $\gcd (a,p)=1$. Then
\begin{equation*}
(a/p)=a^{(p-1)/2}\ \mathrm{mod}\ p.
\end{equation*}
\end{lemma}

\begin{lemma}
\label{Lm:2p} Assume that $p$ is an odd prime. Then
\begin{equation*}
(2/p)=\left\{
\begin{array}{rl}
1, & \text{if $p\equiv 1\ (\mathrm{mod}\ 8)$ or $p\equiv {7}\ (\mathrm{mod}\
8)$}, \\
-1, & \text{if $p\equiv 3\ (\mathrm{mod}\ 8)$ or $p\equiv {5}\ (\mathrm{mod}
\ 8)$}.
\end{array}
\right.
\end{equation*}
\end{lemma}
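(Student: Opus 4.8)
The plan is to invoke Euler's Criterion to reduce the statement to the evaluation of $2^{(p-1)/2}\bmod p$, and then to compute this power by a folding argument of Gauss's type built directly on top of the Criterion. Since $(2/p)=2^{(p-1)/2}\bmod p$, it suffices to show that $2^{(p-1)/2}\equiv 1\pmod p$ exactly when $p\equiv\pm 1\pmod 8$, and $2^{(p-1)/2}\equiv -1\pmod p$ exactly when $p\equiv\pm 3\pmod 8$.

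First I would form the product of the even residues, using $2\cdot 4\cdots (p-1)=2^{(p-1)/2}\bigl((p-1)/2\bigr)!$. Then I would fold each factor $2k$ with $p/2<2k\le p-1$ by replacing it with $2k-p$, so that its absolute value $p-2k$ lies in $\{1,\dots,(p-1)/2\}$; note $2k=p/2$ is impossible since $p$ is odd. The key step is to verify that the absolute values so obtained are pairwise distinct, hence form a permutation of $1,2,\dots,(p-1)/2$: a collision $2k_1=p-2k_2$ would force $p=2(k_1+k_2)$, impossible for odd $p$, while collisions among the unfolded or among the folded terms are immediate. Writing $\mu$ for the number of folded factors, the product reduces to $(-1)^{\mu}\bigl((p-1)/2\bigr)!\pmod p$; cancelling $\bigl((p-1)/2\bigr)!$, which is prime to $p$, yields $2^{(p-1)/2}\equiv(-1)^{\mu}$ and therefore $(2/p)=(-1)^{\mu}$.

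It then remains to determine the parity of $\mu=\#\{k:1\le k\le (p-1)/2,\ 2k>p/2\}=(p-1)/2-\lfloor p/4\rfloor$ in the four residue classes modulo $8$. Writing $p=8m+j$ with $j\in\{1,3,5,7\}$ and evaluating $\lfloor p/4\rfloor$ in each case gives $\mu\equiv 0\pmod 2$ for $j\in\{1,7\}$ and $\mu\equiv 1\pmod 2$ for $j\in\{3,5\}$, which is precisely the claimed sign of $(2/p)$.

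The only genuine content is the folding/permutation step, which is the heart of Gauss's Lemma specialized to $a=2$; everything downstream is the routine case analysis, where the main care will be bookkeeping $\lfloor p/4\rfloor$ correctly across the four classes. An alternative I would keep in reserve is the argument via a primitive eighth root of unity: setting $y=\zeta+\zeta^{-1}$ in a splitting field gives $y^2=2$, and applying Frobenius to compute $y^p$ separates the cases $p\equiv\pm 1$ from $p\equiv\pm 3\pmod 8$ directly. I prefer the folding proof here, since it uses only Euler's Criterion and stays within the elementary framework already invoked in the paper.
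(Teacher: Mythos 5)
Your proof is correct. Note, however, that the paper does not prove this lemma at all: it is stated as a recalled classical fact (the second supplementary law of quadratic reciprocity) with a citation to Burton's textbook, so there is no in-paper argument to compare against. What you have written is the standard self-contained proof via Gauss's Lemma specialized to $a=2$, and all the steps check out: the identity $2\cdot 4\cdots(p-1)=2^{(p-1)/2}\bigl((p-1)/2\bigr)!$, the folding of the factors exceeding $p/2$, the verification that the resulting absolute values are a permutation of $1,\dots,(p-1)/2$ (the only nontrivial collision $2k_1=p-2k_2$ being ruled out by parity), the cancellation of $\bigl((p-1)/2\bigr)!$, and the count $\mu=(p-1)/2-\lfloor p/4\rfloor$, which indeed gives $\mu$ even for $p\equiv\pm 1\pmod 8$ and odd for $p\equiv\pm 3\pmod 8$ (e.g.\ $p=8m+3$ gives $\mu=2m+1$ and $p=8m+7$ gives $\mu=2m+2$). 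Your argument stays entirely within the elementary framework the paper invokes, and combined with Euler's Criterion (which the paper also only recalls) it makes the lemma self-contained; the alternative route via $\zeta+\zeta^{-1}$ for a primitive eighth root of unity that you mention in reserve would also work but requires finite-field machinery the paper does not otherwise use.
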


In particular, the following proposition now follows easily:

\begin{proposition}\label{Pr:Aux}
Let $p$ be an odd prime, and let $r$, $s>0$ be such that $r+s=p$,
$\gcd(r,s)=1$. Then:
\begin{enumerate}
\item[(i)] if $p\equiv 3$ or $5\pmod 8$ then $(r,s)$ flips,

\item[(ii)] if $p\equiv 7\pmod 8$ then $(r,s)$ does not flip.
\end{enumerate}
\end{proposition}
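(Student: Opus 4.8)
The plan is to reduce flipping to a congruence via Proposition \ref{Pr:FNT}, and then to decide the solvability of that congruence from the quadratic character of $2$, which Lemma \ref{Lm:2p} pins down in terms of $p$ modulo $8$. Since $r+s=p$ is odd and $\gcd(r,s)=1$, the second half of Proposition \ref{Pr:FNT} applies, and the entire statement collapses to asking whether $2^k\equiv -1\pmod p$ has a solution $k\ge 0$ in each of the two residue classes.

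For part (i), where $p\equiv 3$ or $5\pmod 8$, I would apply Lemma \ref{Lm:2p} to get $(2/p)=-1$, and then Euler's Criterion to conclude $2^{(p-1)/2}\equiv (2/p)\equiv -1\pmod p$. This already exhibits $k=(p-1)/2$ as an explicit solution of $2^k\equiv -1\pmod p$, so $(r,s)$ flips; notice that in this direction I would not even need Lemma \ref{Lm:Order}.

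For part (ii), where $p\equiv 7\pmod 8$, Lemma \ref{Lm:2p} instead gives $(2/p)=1$, so Euler's Criterion yields $2^{(p-1)/2}\equiv 1\pmod p$; hence the multiplicative order of $2$ in $\mathrm{GF}(p)$ divides $(p-1)/2$. The crux is a parity count: from $p\equiv 7\pmod 8$ one has $p-1\equiv 6\pmod 8$, so $(p-1)/2$ is odd, and therefore the order of $2$ is odd as well. Lemma \ref{Lm:Order} then says the congruence $2^k\equiv -1\pmod p$ has no solution, so $(r,s)$ does not flip.

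I expect no genuine obstacle here: the proposition is essentially a corollary of the machinery already assembled (Proposition \ref{Pr:FNT}, Euler's Criterion, and Lemmas \ref{Lm:Order} and \ref{Lm:2p}). The single step that deserves a moment's attention is the parity count in part (ii)—one must verify that $p\equiv 7\pmod 8$ forces $(p-1)/2$, and not merely $p-1$, to be odd, since it is precisely the oddness of $(p-1)/2$, combined with the divisibility coming from Euler's Criterion and then fed into Lemma \ref{Lm:Order}, that rules out any solution of $2^k\equiv -1$.
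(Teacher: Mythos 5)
Your proposal is correct and follows essentially the same route as the paper: reduce to the solvability of $2^k\equiv -1\pmod p$ via Proposition \ref{Pr:FNT}, use Lemma \ref{Lm:2p} with Euler's Criterion to get $2^{(p-1)/2}\equiv -1$ in case (i), and in case (ii) observe that the order of $2$ divides the odd number $(p-1)/2$ and invoke Lemma \ref{Lm:Order}. The parity check you flag is exactly the point the paper's proof relies on as well.
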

\begin{proof}
Assume $p\equiv 3$ or $5\pmod 8$. Then $2^{(p-1)/2}\equiv-1\pmod p$, so
\eqref{Eq:Congr} has a solution, and (i) follows.

To see (ii), note that if $p\equiv 7\pmod 8$ then $2^{(p-1)/2}\equiv 1\pmod
p$. Hence the order of $2$ in $\mathrm{GF}(p)$ divides $(p-1)/2$, which is
odd. Then \eqref{Eq:Congr} has no solution by Lemma \ref{Lm:Order}.
\end{proof}

\begin{theorem}[Chinese Remainder Theorem]
\label{Th:Chinese} Let $n_1$, $\dots$, $n_m$ be pairwise relatively prime
integers, and let $a_1$, $\dots$, $a_m$ be integers. Then there exists a
unique solution $x$ of the system of congruences $x\equiv a_i\pmod{n_i}$ with
$0\le x< n_1n_2\cdots n_m$.
\end{theorem}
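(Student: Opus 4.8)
The plan is to prove uniqueness and existence separately, disposing of the easy divisibility argument for uniqueness first and then constructing a solution explicitly. Throughout, write $N=n_1n_2\cdots n_m$, so that the asserted range is $0\le x<N$.

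For uniqueness, I would suppose that $x$ and $y$ both satisfy $x\equiv a_i\equiv y\pmod{n_i}$ for every $i$, so that $n_i\mid(x-y)$ for all $i$. The crux is the elementary fact that if pairwise relatively prime integers each divide a fixed integer, then their product divides it as well; this gives $N\mid(x-y)$. Since both $x$ and $y$ lie in $[0,N)$ we have $|x-y|<N$, which forces $x=y$.

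For existence, I would set $N_i=N/n_i$ for each $i$. Because the $n_j$ are pairwise relatively prime, no prime dividing $n_i$ divides any $n_j$ with $j\ne i$, whence $\gcd(N_i,n_i)=1$. Bézout's identity (equivalently, the extended Euclidean algorithm, cf.\ \cite{Burton}) then yields an integer $M_i$ with $N_iM_i\equiv 1\pmod{n_i}$. I would exhibit the candidate $x_0=\sum_{i=1}^m a_iN_iM_i$ and verify the congruences: for $j\ne i$ we have $n_i\mid N_j$, so $N_j\equiv 0\pmod{n_i}$ and only the $i$th summand survives modulo $n_i$, giving $x_0\equiv a_iN_iM_i\equiv a_i\pmod{n_i}$. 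Reducing $x_0$ modulo $N$ then produces the unique representative in $[0,N)$.

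The step I expect to require the most care is existence, and within it the deduction $\gcd(N_i,n_i)=1$: one must use the full strength of pairwise (not merely collective) relative primality, together with the auxiliary lemma that pairwise coprime divisors of a fixed integer have their product dividing it. Both facts are standard consequences of unique factorization and Bézout, but they are precisely where the hypothesis is used, so I would state them explicitly rather than gloss over them. The verification of the congruences and the passage to the canonical range in $[0,N)$ are then routine.
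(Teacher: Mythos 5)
Your proof is correct and is the standard classical argument; the paper itself states the Chinese Remainder Theorem without proof, citing it as a known result (cf.\ \cite{Burton}), so there is no in-paper proof to diverge from. Both the uniqueness step (pairwise coprime divisors of $x-y$ force $N\mid(x-y)$) and the explicit construction $x_0=\sum_i a_iN_iM_i$ are exactly the textbook treatment the paper is implicitly relying on.
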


\begin{lemma}\label{Lm:2power}
Let $n$ be odd, and let $r$, $s>0$ be such that $r+s=n$ and $\gcd (r,s)=1$.
Then the following conditions are equivalent:
\begin{enumerate}
\item[(i)] $(r,s)$ flips,

\item[(ii)] there is $a\in\mathbb N$ such that for every prime $p$ dividing
$n$ there is $k_p$ satisfying $2^{k_p}\equiv -1\pmod p$ such that
$k_p=2^ab_p$, $b_p$ odd,

\item[(iii)] there is $a\in\mathbb N$ such that for every prime $p$ dividing
$n$ there is $k_p$ satisfying $2^{k_p}\equiv -1\pmod p$, and every such
solution satisfies $k_p=2^ab_p$ for some odd $b_p$.
\end{enumerate}
\end{lemma}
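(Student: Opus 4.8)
The plan is to pass everything through Proposition \ref{Pr:FNT}. Since $n=r+s$ is odd and $\gcd(r,s)=1$, that proposition says condition (i) is equivalent to the solvability of $2^{k}\equiv -1\pmod n$ for some $k\ge 0$. So the lemma becomes the purely number-theoretic assertion that this congruence is solvable if and only if (ii) holds, together with the equivalence (ii) $\Leftrightarrow$ (iii). The engine of the whole argument is an exact description of the solution set $S_{q}=\{k\ge 0 : 2^{k}\equiv -1\pmod q\}$ for an odd \emph{prime power} modulus $q=p^{e}$, where the group $(\mathbb Z/q)^{*}$ is cyclic.

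First I would establish this description. Because $(\mathbb Z/q)^{*}$ is cyclic, $-1$ is its unique element of order $2$. Writing $d=\mathrm{ord}_{q}(2)$, a solution of $2^{k}\equiv -1$ forces $2^{2k}\equiv 1$ and $2^{k}\not\equiv 1$, so $S_{q}\ne\emptyset$ exactly when $d$ is even (for $q=p$ this is Lemma \ref{Lm:Order}); and when $d$ is even, $2^{d/2}$ is the order-$2$ element $-1$, whence $S_{q}=\{k : k\equiv d/2\pmod d\}=\{(d/2)(2j+1) : j\ge 0\}$. The key consequence is that every element of $S_{q}$ has the same $2$-adic valuation $v_{2}(k)=v_{2}(d)-1$. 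Applying this with $q=p$ to each prime $p\mid n$ isolates the number $a_{p}:=v_{2}(\mathrm{ord}_{p}(2))-1$ as the common valuation of all solutions of $2^{k}\equiv -1\pmod p$, and makes (ii) $\Leftrightarrow$ (iii) immediate: both conditions assert exactly that $\mathrm{ord}_{p}(2)$ is even for every $p\mid n$ and that the resulting $a_{p}$ equals one fixed $a$ for all of them (in (ii) one demands a solution of valuation $a$, in (iii) that every solution have valuation $a$; since all solutions share the valuation $a_{p}$, both say $a_{p}=a$).

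Next I would bridge from $p$ to the prime power $p^{e_{p}}$ actually dividing $n$. The elementary fact needed is that $\mathrm{ord}_{p^{e}}(2)/\mathrm{ord}_{p}(2)$ is a power of $p$: on one hand $\mathrm{ord}_{p}(2)\mid \mathrm{ord}_{p^{e}}(2)$, and on the other, writing $2^{\mathrm{ord}_{p}(2)}=1+pt$ gives $2^{\mathrm{ord}_{p}(2)\,p^{e-1}}\equiv 1\pmod{p^{e}}$, so $\mathrm{ord}_{p^{e}}(2)\mid \mathrm{ord}_{p}(2)\,p^{e-1}$. As $p$ is odd this ratio is odd, so $v_{2}(\mathrm{ord}_{p^{e}}(2))=v_{2}(\mathrm{ord}_{p}(2))$. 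Hence $S_{p^{e_{p}}}$ is nonempty under the same condition as $S_{p}$, and all its elements again have valuation $a_{p}$ computed from $p$ alone.

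Finally I would assemble the global congruence by the Chinese Remainder Theorem (Theorem \ref{Th:Chinese}). Writing $n=\prod_{p\mid n}p^{e_{p}}$, solvability of $2^{k}\equiv -1\pmod n$ is equivalent to choosing a single $k\in\bigcap_{p\mid n}S_{p^{e_{p}}}$, and this synthesis is where the real work lies. For necessity, a common $k$ has a single $2$-adic valuation, so all $a_{p}$ must agree with $a:=v_{2}(k)$, forcing (ii)/(iii). For sufficiency, assume every $a_{p}$ equals a common $a$ and write $\mathrm{ord}_{p^{e_{p}}}(2)/2=2^{a}u_{p}$ with $u_{p}$ odd, so that $S_{p^{e_{p}}}$ consists of $2^{a}$ times the odd multiples of $u_{p}$. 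Letting $M=\mathrm{lcm}(u_{p} : p\mid n)$, which is odd, $M$ is an odd multiple of each $u_{p}$, so $k=2^{a}M$ lies in every $S_{p^{e_{p}}}$ and solves $2^{k}\equiv -1\pmod n$. The hard part is exactly this step: separating the invariant power-of-two datum $a_{p}$ from the odd part $u_{p}$ of each local order, and checking that the odd parts reconcile simultaneously through a single odd least common multiple while the powers of $2$ match automatically. With this in hand the chain (i) $\Leftrightarrow$ solvability $\Leftrightarrow$ (ii) $\Leftrightarrow$ (iii) closes.
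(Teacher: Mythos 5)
Your proof is correct, and its global architecture matches the paper's: both pass through Proposition \ref{Pr:FNT} to turn (i) into solvability of $2^{k}\equiv -1\pmod n$, both localize at the prime powers dividing $n$, both must lift information from $p$ to $p^{e}$, and both reassemble with the Chinese Remainder Theorem while tracking the $2$-adic valuation of the exponent. Where you genuinely diverge is in the key lemma driving the argument. The paper manipulates individual solutions: it lifts a solution mod $p$ to one mod $p^{r}$ by an explicit induction using $x^{p}+1=(x+1)\sum_{i=0}^{p-1}(-1)^{i}x^{i}$, writes down the single global exponent $K=2^{a}(b_{1}p_{1}^{m_{1}-1}\cdots b_{\ell}p_{\ell}^{m_{\ell}-1})$ by hand, and proves (ii)$\Leftrightarrow$(iii) by a separate computation showing that two solutions of different $2$-adic valuations would force $-1\equiv 1\pmod p$. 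You instead prove one structural fact --- that for an odd prime power $q$ with $d=\mathrm{ord}_{q}(2)$ even, the full solution set is the arithmetic progression $(d/2)(2\mathbb N+1)$, via the uniqueness of the order-two element in the cyclic group $(\mathbb Z/q)^{*}$ --- and then the constancy of the valuation, the equivalence (ii)$\Leftrightarrow$(iii), and the prime-power lifting (through $\mathrm{ord}_{p^{e}}(2)/\mathrm{ord}_{p}(2)$ being an odd power of $p$) all fall out of that single description; your global exponent is an odd least common multiple rather than an explicit product. The contents are equivalent, but your version is more systematic and unifies the two halves of the proof that the paper treats by separate ad hoc computations, at the mild cost of invoking cyclicity of $(\mathbb Z/p^{e})^{*}$, which the paper's bare-hands congruence manipulations avoid.
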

\begin{proof}
We first show the equivalence of (i) and (ii).

Assume that (i) holds. Then $2^k\equiv -1\pmod n$ for some $k$. Thus
$2^k\equiv -1\pmod p$ for every prime divisor $p$ of $n$, and it suffices to
set $k_p=k$.

Conversely, assume (ii), let $n=p_1^{m_1}\cdots p_\ell^{m_\ell}$ be the prime
factorization of $n$, and let $k_i=2^ab_i$ be such that $b_i$ is odd and
$2^{k_i}\equiv -1\pmod {p_i}$ for every $i$. We first show by induction on
$r$ that $2^{kp^{r-1}}\equiv -1\pmod {p^r}$ holds for every $k=k_i$, $p=p_i$
and $r>0$. There is nothing to prove for $r=1$. Let $x=2^{kp^{r-1}}$ and
assume that $x\equiv -1\pmod{p^r}$. Since $p$ is odd, we have $x^p+1 =
(x+1)\sum_{i=0}^{p-1}(-1)^ix^i$. By the induction hypothesis, $p^r$ divides
$x+1$. Then $x\equiv -1\pmod p$, too, and thus
$\sum_{i=0}^{p-1}(-1)^ix^i\equiv \sum_{i=0}^{p-1}(-1)^i(-1)^i\equiv p\equiv
0\pmod p$. Altogether, $p^{r+1}$ divides $x^p+1$, and the claim is proved.
Now let $K=2^a(b_1p_1^{m_1-1}\cdots b_\ell p_\ell^{m_\ell -1})$. By the
claim, $2^{k_ip_i^{m_i-1}}\equiv -1\pmod{p_i^{m_i}}$. As $k_i=2^ab_i$, $K$ is
an odd multiple of $k_ip_i^{m_i-1}$, and so $2^K\equiv -1\pmod{p_i^{m_i}}$.
Then $2^K\equiv -1\pmod n$ by the Chinese Remainder Theorem, and $(r,s)$
flips.

Condition (iii) implies (ii). To see the converse, it suffices to show that any
two solutions to $2^k\equiv -1\pmod p$ are divisible by the same powers of $2$.
Assume that $2^u\equiv -1\equiv 2^v\pmod p$, $u=2^ab$, $b$ odd, $v=2^cd$, $d$
odd, and that $a>c$. Then $-1\equiv (-1)^d\equiv (2^u)^d\equiv 2^{2^abd}\equiv
2^{2^cdb2^{a-c}}\equiv (2^v)^{b2^{a-c}}\equiv (-1)^{b2^{a-c}}\equiv 1$, a
contradiction.
\end{proof}

\begin{theorem}\label{Th:Primes}
Let $n$ be odd, and let $r$, $s>0$ be such that $r+s=n$ and $\gcd (r,s)=1$.
Then:
\begin{enumerate}
\item[(i)] if every prime $p$ dividing $n$ satisfies $p\equiv 3\pmod 8$ then
$(r,s)$ flips,

\item[(ii)] if every prime $p$ dividing $n$ satisfies $p\equiv 5\pmod 8$ then
$(r,s)$ flips,

\item[(iii)] if $n$ is divisible by primes $p\equiv 3$, $q\equiv 5\pmod 8$
then $(r,s)$ does not flip,

\item[(iv)] if $n$ is divisible by a prime $p$ with $p\equiv 7\pmod 8$ then
$(r,s)$ does not flip.
\end{enumerate}
\end{theorem}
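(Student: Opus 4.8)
The plan is to route everything through Lemma \ref{Lm:2power}, whose hypotheses ($n$ odd, $\gcd(r,s)=1$) are exactly those of the present theorem. That lemma reduces flipping to a statement about the local congruences $2^{k}\equiv -1\pmod p$ for the primes $p\mid n$: namely, $(r,s)$ flips if and only if every such $p$ admits a solution and there is a single $a\in\mathbb N$ with $v_2(k_p)=a$ for all $p$, where $v_2$ denotes the $2$-adic valuation. By condition (iii) of that lemma, all solutions for a fixed $p$ share the same valuation, so this value is an invariant $a_p$ attached to each prime. Thus the whole theorem comes down to computing $a_p$ for primes in the relevant residue classes modulo $8$ and then checking whether the $a_p$ agree across the primes dividing $n$.

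For part (iv) I would first dispose of the class $p\equiv 7\pmod 8$. By Lemma \ref{Lm:2p} we have $(2/p)=1$, so Euler's Criterion gives $2^{(p-1)/2}\equiv 1\pmod p$; since $p\equiv 7\pmod 8$ forces $(p-1)/2$ to be odd, the multiplicative order of $2$ in $\mathrm{GF}(p)$ is odd, and Lemma \ref{Lm:Order} then shows the congruence $2^{k}\equiv -1\pmod p$ has no solution at all. Hence condition (ii) of Lemma \ref{Lm:2power} cannot be met for this $p$, and $(r,s)$ does not flip.

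For the remaining parts I would compute $a_p$ for $p\equiv 3$ or $5\pmod 8$. Here Lemma \ref{Lm:2p} gives $(2/p)=-1$, so Euler's Criterion yields $2^{(p-1)/2}\equiv -1\pmod p$; that is, $k_p=(p-1)/2$ is an explicit solution, and its valuation is the invariant $a_p$. A short computation shows $a_p=v_2\bigl((p-1)/2\bigr)=0$ when $p\equiv 3\pmod 8$ (since $(p-1)/2$ is then odd) and $a_p=1$ when $p\equiv 5\pmod 8$. Parts (i) and (ii) follow at once: if every prime divisor of $n$ lies in the single class $3$, respectively $5$, then all the $a_p$ coincide (at $0$, respectively $1$), the common value $a$ exists, and $(r,s)$ flips by Lemma \ref{Lm:2power}. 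For part (iii), a prime $p\equiv 3$ contributes $a_p=0$ while a prime $q\equiv 5$ contributes $a_q=1$; no single $a$ can equal both, so condition (ii) of Lemma \ref{Lm:2power} fails and $(r,s)$ does not flip.

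I do not expect a serious obstacle: once Lemma \ref{Lm:2power} is in hand, the argument is essentially the observation that $(p-1)/2$ is a distinguished solution of $2^{k}\equiv -1\pmod p$ for $p\equiv 3,5\pmod 8$, followed by reading off its parity. The one point requiring care is that part (iii) genuinely needs the uniqueness-of-valuation content of Lemma \ref{Lm:2power}(iii): it is not enough to exhibit one solution of each valuation, one must know that \emph{every} solution at a given prime has the same valuation $a_p$, so that no alternative choice of $k_p$ could reconcile the mismatched values $0$ and $1$ into a common $a$.
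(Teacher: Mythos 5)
Your proposal is correct and follows essentially the same route as the paper: both take $k_p=(p-1)/2$ as the distinguished solution furnished by Lemma \ref{Lm:2p} and Euler's Criterion, read off its $2$-adic valuation ($0$ for $p\equiv 3$, $1$ for $p\equiv 5\pmod 8$), invoke Lemma \ref{Lm:Order} to rule out solutions when $p\equiv 7\pmod 8$, and conclude via Lemma \ref{Lm:2power}. Your explicit remark that part (iii) needs the uniqueness-of-valuation content of Lemma \ref{Lm:2power}(iii) is a point the paper leaves implicit, but the arguments are the same.
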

\begin{proof}
For a prime divisor $p$ of $n$, let $k_p=(p-1)/2$. When $p\equiv 3\pmod 8$
then $k_p$ is odd and $2^{k_p}\equiv -1\pmod p$. When $p\equiv 5\pmod 8$ then
$k_p$ is even, not divisible by $4$, and $2^{k_p}\equiv -1\pmod p$. Parts
(i), (ii) and (iii) therefore follow by Lemma \ref{Lm:2power}. When $p\equiv
7\pmod 8$ then $2^k\equiv -1\pmod p$ has no solution, as explained in the
proof of Proposition \ref{Pr:Aux}, and (iv) follows again by Lemma
\ref{Lm:2power}.
\end{proof}

\begin{remark}
The cases not covered in Theorem \ref{Th:Primes} seem to be complicated. For
instance:
\begin{enumerate}
\item[] $17\equiv 1\pmod 8$ and $(1,16)$ flips,

\item[] $73\equiv 1\pmod 8$ and $(1,72)$ does not flip,

\item[] $51 = 17\cdot 3\equiv 1\cdot 3\pmod 8$, and $(1,50)$ does not flip,

\item[] $843 = 281\cdot 3 \equiv 1\cdot 3\pmod 8$, and $(1,842)$ flips,

\item[] $85 = 17.5\equiv 1\cdot 5\pmod 8$, and $(1,84)$ does not flip,

\item[] $205 = 41\cdot 5 \equiv 1\cdot 5\pmod 8$, and $(1,204)$ flips.
\end{enumerate}
In order to fully understand such situations, we would have to know not only
whether \eqref{Eq:Congr} has a solution $k$ for $n=p\equiv 1\pmod 8$, but
also the highest power of $2$ dividing the solution (and hence all
solutions).
\end{remark}

The following Proposition tells us how to proceed in the even case or when
$\gcd(r,s)\ne 1$:

\begin{proposition}
\label{Pr:EvenF} Let $r$, $s>0$, $r+s=n=2^{a}b$, $b$ odd, $\gcd
(r,s)=2^{c}d$, $d$ odd, $(r',s')=\omega ^{a-c}(r,s)$, $g'=\gcd (r',s')$. Then
$r'/g'+s'/g'$ is odd, and $(r,s)$ flips if and only if $(r'/g',s'/g')$ flips.
\end{proposition}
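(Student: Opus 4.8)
The plan rests on three facts proved above: the depth behaviour of $\omega$ (Lemma \ref{Lm:g2g}), the characterization of orbits as equivalence classes (Lemma \ref{Lm:Omega}), and the scaling of bottoms (Lemma \ref{Lm:OrbGcd}). Write $\sigma(x,y)=(y,x)$ for the swap. Since $\omega(\sigma(x,y))=\sigma(\omega(x,y))$, the map $\sigma$ commutes with $\omega$, so by the symmetric characterization of membership in Lemma \ref{Lm:Omega} one gets $O(\sigma(r,s))=\sigma(O(r,s))=O(s,r)$. Hence $(r,s)$ flips, i.e.\ $(s,r)\in O(r,s)$, exactly when the orbit $O(r,s)$ is $\sigma$-invariant; in particular flipping is a property of the orbit and not of the chosen base point. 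As $(r',s')=\omega^{a-c}(r,s)$ lies in $O(r,s)$, this already yields that $(r,s)$ flips if and only if $(r',s')$ flips.

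Next I would pin down $(r',s')$ and prove $r'/g'+s'/g'$ is odd. By Lemma \ref{Lm:g2g}, each application of $\omega$ above the bottom doubles the depth while fixing its odd part $d$, and the depth stabilises precisely when its $2$-part reaches $a$, the $2$-part of the invariant sum $n$. Starting from depth $2^{c}d$, the $a-c$ applications forming $\omega^{a-c}$ therefore terminate at a point $(r',s')$ of depth $g'=2^{a}d$ lying in the bottom $B(O)$. Consequently $r'/g'+s'/g'=n/g'=2^{a}b/2^{a}d=b/d$, which is an integer since $d\mid b$, and is odd because both $b$ and $d$ are.

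It remains to reduce $(r',s')$ to the coprime pair. Since $(r',s')\in B(r',s')$ and $r'/g'+s'/g'$ is odd, Lemma \ref{Lm:OrbGcd} yields $B(r',s')=g'\cdot O(r'/g',s'/g')$. The point $(s',r')=\sigma(r',s')$ has depth $\gcd(s',r')=g'$, which is the maximal depth in $O(r',s')$; hence $(s',r')\in O(r',s')$ if and only if $(s',r')\in B(r',s')$, i.e.\ if and only if $(s'/g',r'/g')\in O(r'/g',s'/g')$. In other words $(r',s')$ flips if and only if $(r'/g',s'/g')$ flips. Chaining this with the orbit invariance of the first paragraph gives the stated equivalence $(r,s)$ flips iff $(r'/g',s'/g')$ flips.

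The point needing the most care is the dichotomy of Proposition \ref{Pr:Bottom}: the argument above tacitly assumes the bottom is a genuine cycle, so that no iterate of $\omega$ meets a diagonal point and $\omega^{a-c}$ is everywhere defined. This cyclic regime is exactly $b>d$, because a cyclic bottom has depth $2^{a}d$, and since a depth can never exceed $n/2=2^{a-1}b$ this is possible only when $b\ge 2d$ (and $d\mid b$, $b>d$ then force $b\ge 2d$). The complementary case $b=d$ is degenerate: the bottom of the orbit is the single diagonal point $(2^{a-1}b,2^{a-1}b)$, the expression $\omega^{a-c}$ overshoots it by one step, and one argues separately that here $(r,s)$ flips automatically, since the $\sigma$-fixed diagonal point in the orbit forces $O(r,s)=\sigma(O(r,s))$, while the reduced data collapses to the trivially flipping $(1,1)$. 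Verifying that $b=d$ is the precise boundary of this exceptional behaviour, and that the stated formula is to be read in the cyclic regime, is the main subtlety.
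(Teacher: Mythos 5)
Your argument is correct and follows essentially the same route as the paper's (very terse) proof: identify $(r',s')$ as the point where the descending path from $(r,s)$ reaches the bottom, invoke Lemma \ref{Lm:OrbGcd} to write $B(r',s')=g'\cdot O(r'/g',s'/g')$ with $r'/g'+s'/g'=b/d$ odd, and then conclude. What you add, and what the paper leaves entirely to ``the rest follows,'' is the clean justification that flipping is an invariant of the orbit: since $\sigma(x,y)=(y,x)$ commutes with $\omega$, Lemma \ref{Lm:Omega} gives $O(s,r)=\sigma(O(r,s))$, so $(r,s)$ flips exactly when its orbit is $\sigma$-invariant, and the reduction first to $(r',s')$ and then to the coprime pair goes through. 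Your final paragraph is a genuine and worthwhile observation rather than a mere subtlety: when $b=d$ the expression $\omega^{a-c}(r,s)$ is literally undefined, since the forward path reaches the diagonal point $(2^{a-1}b,2^{a-1}b)$ already at step $a-c-1$ (e.g.\ $n=2^m$, $(r,s)=(1,n-1)$ has $a-c=m$ but $\omega^{m-1}(1,n-1)$ is diagonal, as used in Proposition \ref{Pr:CompleteOrbits}); and if one instead reads $(r',s')$ as ``the first bottom point,'' as the paper's proof does, then $r'/g'+s'/g'=2$ is even, contradicting the first assertion. The paper does not address this case, and your patch --- a $\sigma$-fixed point in the orbit forces $\sigma$-invariance, so $(r,s)$ flips automatically, matching the trivially flipping reduced pair $(1,1)$ --- is correct and completes the statement.
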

\begin{proof}
The point $(r',s')$ is the first point at the bottom $B=B(r,s)$ encountered
along the unique directed path from $(r,s)$, by Theorem \ref{Th:ShapeO}. By
Lemma \ref{Lm:OrbGcd}, $B=g'\cdot B(r'/g',s'/g')$ and $r'/g'+s'/g'$ is odd.
The rest follows.
\end{proof}

We conclude this section with an example:

\begin{example}
Does $(r,s)=(435,137)$ flip? Since $435+137=572=2^{2}\cdot 143$ and $\gcd
(435,137)=1$, we look at $(r',s')=\omega^2(r,s) = (4\cdot 435\ \mathrm{mod} \
572,\,4\cdot 147\ \mathrm{mod}\ 572)=(24,548)$. Since $g'=\gcd (24,548)=4$,
we know by Proposition \ref{Pr:EvenF} that $(r,s)$ flips if and only if
$(24/4,548/4)=(6,137)$ flips. The odd sum $6+137=143$ factors as $143=11\cdot
13$. Since $11\equiv 3\ (\mathrm{mod}\ 8)$ and $13\equiv 5\ (\mathrm{mod}\
8)$, Theorem \ref{Th:Primes} tells us that $(6,137)$ does not flip. Hence
$(435,137)$ does not flip.
\end{example}

\section{Flips and alternative loops}\label{Sc:FlipsAlt}

A groupoid with neutral element in which the equation $ab=c$ has a unique
solution whenever two of the element $a$, $b$, $c$ are given is known as a
\emph{loop}. In particular, we can cancel on the left and on the right in a
loop, i.e., $xy=xz$ or $yx=zx$ implies $y=z$. Multiplication tables of finite
loops are therefore precisely normalized Latin squares. See
\cite{Pflugfelder} for an introductory text on the theory of loops.

Let $L$ be a loop with neutral element $e$. Then for every $x\in L$ there
are uniquely determined $y$, $z\in L$ such that $xy=zx=e$. If $y=z$, we say
that $x$ has a \emph{two-sided inverse}.

A loop is \emph{alternative} if it satisfies the left and right alternative
laws. Although it is trivial to construct finite loops without two-sided
inverses, no finite alternative loops without two-sided inverses are known.

\begin{problem}[Warren D.~Smith (2004)]
\label{Pr:AL} Is there a finite alternative loop without two-sided inverses?
\end{problem}

To see the connection between this problem and flips, consider the following:

Assume that $L$ is a finite loop, and let $x\in L$. Define the left powers
$x^{(n)}$ recursively by $x^{(0)}=e$, $x^{(n+1)} = xx^{(n)}$. Let $x^{[n]}$
denote the analogously defined right powers.

By finiteness of $L$, there is a smallest positive integer $n$ such that
$x^{(n)}=x^{(m)}$ for some $0\le m<n$. If $m>0$ then
$xx^{(n-1)}=x^{(n)}=x^{(m)}=xx^{(m-1)}$, and the left cancelation implies that
$x^{(n-1)}=x^{(m-1)}$, a contradiction with the minimality of $n$.

Thus for every $x$ there exists $n$ such that $xx^{(n)}=e$. Similarly, there
exists $m$ such that $x^{[m]}x=e$. Clearly then, $x$ has a two-sided inverse
if and only if $x^{(n)}=x^{[m]}$ for the above integers $n$, $m$.

Assume that $xx^{(n)}=e$. If it were possible to conclude that $x^{(n)}x=e$
by using alternative laws only, then $(1,n)$ would have to flip. However, we
know that $(1,n)$ does not flip for all values of $n$.

Thus any proof of Problem \ref{Pr:AL} must involve either cancelation or the
neutral element $e$. For instance, one could prove $xx^{(n)}=x^{[m]}x$ by
showing $v(xx^{(n)})=v(x^{[m]}x)$ for some word $v$, and then canceling $v$. As
we are going to show in the next section, the finiteness of the loop in
question must also be incorporated into any such proof.

We conclude this section with an existence result of arbitrarily long intervals
$(r,1)$, $\dots$, $(r,s)$ where no $(r,i)$ flips.

\begin{theorem}[Dirichlet]
\label{Th:Dirichlet} Let $a$, $b$ be relatively prime integers. Then the
arithmetic progression $an+b$ contains infinitely many primes.
\end{theorem}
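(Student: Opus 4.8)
The plan is to run the classical analytic argument of Dirichlet; since this theorem lies entirely outside the combinatorial methods of the present paper, in practice one simply cites it (e.g.\ \cite{Burton}), but the natural route is the following. Assume without loss of generality that the modulus $a$ is positive, and fix $b$ with $\gcd(a,b)=1$. Rather than counting primes directly, I would prove the stronger quantitative statement that
\[
\sum_{p\equiv b\ (\mathrm{mod}\ a)}\frac{1}{p^{s}}\longrightarrow\infty\qquad\text{as }s\to 1^{+},
\]
where $p$ ranges over primes; this divergence forces the progression $an+b$ to contain infinitely many primes.

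The main tool is harmonic analysis on the group $(\mathbb{Z}/a\mathbb{Z})^{\times}$. First I would introduce the Dirichlet characters $\chi$ modulo $a$, i.e.\ the homomorphisms $(\mathbb{Z}/a\mathbb{Z})^{\times}\to\mathbb{C}^{\times}$ extended by $\chi(n)=0$ when $\gcd(n,a)>1$, together with their $L$-functions
\[
L(s,\chi)=\sum_{n\ge 1}\frac{\chi(n)}{n^{s}}=\prod_{p}\Bigl(1-\frac{\chi(p)}{p^{s}}\Bigr)^{-1},
\]
which converge for $\Re s>1$. Orthogonality of characters gives, for $\gcd(n,a)=1$, that $\tfrac{1}{\phi(a)}\sum_{\chi}\overline{\chi(b)}\,\chi(n)$ equals $1$ if $n\equiv b\ \mathrm{mod}\ a$ and $0$ otherwise, where $\phi$ is Euler's totient. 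Taking the logarithm of the Euler product and separating off the contribution of higher prime powers (which stays bounded as $s\to 1^{+}$), I would assemble the key identity
\[
\sum_{\chi}\overline{\chi(b)}\,\log L(s,\chi)=\phi(a)\sum_{p\equiv b\ (\mathrm{mod}\ a)}\frac{1}{p^{s}}+E(s),
\]
with $E(s)$ bounded near $s=1$.

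Now I would read off the two sides. The principal character $\chi_{0}$ yields an $L$-function that differs from the Riemann zeta function $\zeta(s)$ by only finitely many Euler factors, so it inherits the simple pole of $\zeta$ at $s=1$ and $\log L(s,\chi_{0})\to+\infty$ as $s\to 1^{+}$. For every nonprincipal $\chi$, the series $L(s,\chi)$ extends continuously to $s=1$, and the corresponding summand $\log L(s,\chi)$ stays bounded \emph{provided} $L(1,\chi)\ne 0$. Granting this non-vanishing, the left-hand side of the identity diverges to $+\infty$, hence so does the prime sum on the right, which is exactly what was wanted.

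The step I expect to be the main obstacle is precisely the non-vanishing $L(1,\chi)\ne 0$ for nonprincipal $\chi$. For a complex character the argument is comparatively soft: if $L(1,\chi)=0$ then $L(1,\overline{\chi})=0$ as well, producing at least two zeros to offset the single pole coming from $\chi_{0}$, so the product $\prod_{\chi}L(s,\chi)$ would tend to $0$ as $s\to 1^{+}$; but this product is a Dirichlet series with non-negative coefficients and constant term $1$, hence is $\ge 1$ for real $s>1$, a contradiction. The genuinely hard case is a real nonprincipal character $\chi$, where a simple zero would merely cancel the simple pole and the counting argument fails. Here I would invoke a positivity device: the Dirichlet series $\zeta(s)L(s,\chi)=\sum_{n\ge 1}a_{n}n^{-s}$ has $a_{n}\ge 0$ with $a_{m^{2}}\ge 1$, so by Landau's theorem on Dirichlet series with non-negative coefficients a vanishing $L(1,\chi)$ would make this function entire and the series convergent for all real $s>0$, contradicting its divergence at $s=\tfrac12$. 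This real-character case is where all the analytic difficulty is concentrated.
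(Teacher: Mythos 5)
The paper does not prove this statement at all: it is quoted as a classical theorem (Dirichlet, 1837) and used as a black box in the proof of Proposition \ref{Pr:NonFlip}, so there is no internal argument to compare yours against, and citing it is indeed the intended treatment. That said, your outline of the standard analytic proof is accurate and complete in its main ideas: the reduction to the divergence of $\sum_{p\equiv b\ (\mathrm{mod}\ a)}p^{-s}$ as $s\to 1^{+}$, the use of orthogonality of the characters of $(\mathbb{Z}/a\mathbb{Z})^{\times}$ to isolate the progression, the pole of $L(s,\chi_{0})$ inherited from $\zeta(s)$, and the correct identification of the non-vanishing $L(1,\chi)\ne 0$ as the crux --- with the soft counting argument via $\prod_{\chi}L(s,\chi)\ge 1$ for complex characters and a positivity/Landau argument on $\zeta(s)L(s,\chi)$ for real characters. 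This is far more machinery than anything else in the paper, which is precisely why the authors state the theorem without proof; for the purposes of this paper you should simply cite a standard reference rather than reprove it.
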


\begin{lemma}
\label{Lm:OrbMod} Let $r$, $s$, $t>0$ be such that $t$ is odd, divides $r+s$,
and does not divide $r$. If there is a directed path in $O(r,s)$ from $(r,s)$
to $(r',s')$, then there is a directed path from $(r\ \mathrm{mod}\ t,\,s\
\mathrm{mod}\ t)$ to $(r'\ \mathrm{mod}\ t,\, s'\ \mathrm{mod}\ t)$ in $O(r\
\mathrm{mod}\ t,\, s\ \mathrm{mod}\ t) $.
\end{lemma}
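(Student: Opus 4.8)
The plan is to read a directed path as a sequence of applications of the forward map $\omega$ (this is what the arrows in Figure \ref{Fg:Orb20} denote), so that the hypothesis becomes $(r',s')=\omega^k(r,s)=(2^kr\bmod n,\,2^ks\bmod n)$ for some $k\ge 0$, where $n=r+s$, and then to show that coordinatewise reduction modulo $t$ carries this $\omega$-path to an $\omega$-path in $O(r\bmod t,\,s\bmod t)$. First I would fix notation: set $\bar r=r\bmod t$ and $\bar s=s\bmod t$. Since $t\mid n$ and $t\nmid r$ we get $\bar r\ne 0$, and if $\bar s=0$ then $t\mid s$, hence $t\mid r$, a contradiction, so $\bar s\ne 0$ too. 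From $0<\bar r,\bar s<t$ and $\bar r+\bar s\equiv 0\pmod t$ it follows that $\bar r+\bar s=t$, and because $t$ is odd this forces $\bar r\ne\bar s$. In particular $\omega$ is defined on $(\bar r,\bar s)$, and as the sum $t$ is odd, Theorem \ref{Th:ShapeO}(ii) tells us that $O(\bar r,\bar s)$ is a single directed cycle along which every step is an application of $\omega$; so a directed path there is again just an iterate of $\omega$.

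The key step is the intertwining identity. Writing $\pi(a,b)=(a\bmod t,\,b\bmod t)$, I claim $\pi(\omega(a,b))=\omega(\pi(a,b))$ whenever $t\mid(a+b)$ and neither $a$ nor $b$ is divisible by $t$. On one side, $\omega(a,b)$ is obtained from $(2a,2b)$ by subtracting a multiple of $a+b$ in each coordinate (either $0$ or one copy, according to whether the corresponding coordinate exceeds $a+b$); since $t\mid(a+b)$, this subtraction is invisible modulo $t$, so $\pi(\omega(a,b))=(2a\bmod t,\,2b\bmod t)$. On the other side, $\pi(a,b)=(\bar a,\bar b)$ satisfies $\bar a+\bar b=t$ with both entries nonzero (exactly as above), so by the definition of $\omega$ we get $\omega(\bar a,\bar b)=(2\bar a\bmod t,\,2\bar b\bmod t)=(2a\bmod t,\,2b\bmod t)$. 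The two sides agree, which proves the identity.

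To finish I would check that the identity can be applied at every step of the given path, and then iterate. The first coordinate of $\omega^j(r,s)$ reduces modulo $t$ to $2^j\bar r\bmod t$, which is nonzero because $\gcd(2,t)=1$ and $\bar r\ne 0$, and likewise the second coordinate reduces to a nonzero $2^j\bar s\bmod t$; hence at each point of the path both coordinates are coprime to $t$ (in particular not divisible by $t$), so the hypotheses of the intertwining identity hold throughout. By induction on $j$ this gives $\pi(\omega^j(r,s))=\omega^j(\bar r,\bar s)$ for all $0\le j\le k$, and taking $j=k$ yields $(r'\bmod t,\,s'\bmod t)=\omega^k(\bar r,\bar s)$. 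Thus $\pi$ sends the directed path $(r,s),\omega(r,s),\dots,(r',s')$ to the directed path $(\bar r,\bar s),\omega(\bar r,\bar s),\dots,(r'\bmod t,\,s'\bmod t)$ in $O(\bar r,\bar s)$, as required. The one delicate point—and the place where the oddness of $t$ is genuinely used—is keeping both reduced coordinates strictly positive so that $\omega$ stays defined and the image really is a point of $\mathbb N^+\times\mathbb N^+$; this is precisely what $\gcd(2,t)=1$ secures, and I expect it to be the only part of the argument requiring care.
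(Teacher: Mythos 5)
Your proof is correct and follows essentially the same route as the paper's: both establish that reduction modulo $t$ intertwines with $\omega$ (using $t\mid(r+s)$ to make the mod-$(r+s)$ reductions invisible mod $t$), and both verify that the condition ``$t$ does not divide the first coordinate'' propagates along the path (via $\gcd(2,t)=1$) so the step can be iterated. The only quibble is your phrase ``coprime to $t$'' --- the coordinates need only be nonzero mod $t$, which is what your argument actually shows and all that is needed.
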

\begin{proof}
Since $t$ divides $r+s$ and $t$ does not divide $r$, we see that $(r\
\mathrm{mod}\ t) + (s\ \mathrm{mod}\ t) = t$.

The first coordinate of $\omega(r,s)$ is $2r\ \mathrm{mod}\ (r+s)$. The first
coordinate of $\omega(r\ \mathrm{mod}\ t$, $s\ \mathrm{mod}\ t)$ is $[2(r\
\mathrm{mod}\ t)]\ \mathrm{mod}\ (r\ \mathrm{mod}\ t\ +\ s\ \mathrm{mod}\ t)] =
2(r\ \mathrm{mod}\ t)\ \mathrm{mod}\ t = 2r\ \mathrm{mod}\ t$. Thus the first
coordinate of $\omega(r,s)$ is mapped onto the first coordinate of $\omega(r\
\mathrm{mod}\ t,\,s\ \mathrm{mod}\ t)$ under the map $u\mapsto u\ \mathrm{mod}\
t$. Since the sum of coordinates is preserved under $\omega$, an analogous
statement holds for the second coordinate.

Note that $t$ divides $(2r\ \mathrm{mod}\ (r+s)) + (2s\ \mathrm{mod}\
(r+s))=r+s$. Thus, if we show that $t$ does not divide $2r\ \mathrm{mod}\
(r+s)$, we can repeat the step in the previous paragraph as many times as we
wish; hence finishing the proof. Now, $(2r\ \mathrm{mod}\ (r+s))\
\mathrm{mod}\ t = 2r\ \mathrm{mod}\ t\ne 0$ since $t$ is odd and does not
divide $r$.
\end{proof}

\begin{proposition}
\label{Pr:NonFlip} Let $M$ be a positive integer. Then there exists $r>0$
such that none of $(r,1)$, $(r,2)$, $\dots$, $(r,M)$ flips.
\end{proposition}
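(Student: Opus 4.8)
The plan is to force each pair $(r,i)$ to reduce, modulo a suitable prime, to a coprime pair whose sum is a prime $\equiv 7\pmod 8$, where Proposition~\ref{Pr:Aux}(ii) forbids flipping, and then to transport this non-flipping back up to $(r,i)$ through the modular reduction of Lemma~\ref{Lm:OrbMod}. First I would invoke Dirichlet's theorem (Theorem~\ref{Th:Dirichlet}) to pick $M$ distinct primes $p_1,\dots,p_M$, each congruent to $7$ modulo $8$ and each exceeding $M$. Being distinct primes they are pairwise relatively prime, so the Chinese Remainder Theorem (Theorem~\ref{Th:Chinese}) yields an $r$ with $r\equiv -i\pmod{p_i}$ for all $i$; its least positive representative is nonzero (it is $\equiv-1\not\equiv 0\pmod{p_1}$), so we may take $r>0$. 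For each $i$ we then have $p_i\mid r+i$, while $p_i>M\ge i$ gives that $p_i$ does not divide $i$, and hence $p_i$ does not divide $r$.

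Now fix $i$ and set $t=p_i$. Reducing modulo $t$ as in Lemma~\ref{Lm:OrbMod}, the pair $(r\bmod t,\,i)$ has positive coordinates summing to $t$, so it is coprime with odd prime sum $t\equiv 7\pmod 8$; by Proposition~\ref{Pr:Aux}(ii) it does not flip. The task reduces to showing that this forces $(r,i)$ not to flip. Suppose instead that $(r,i)$ flips, i.e. $(i,r)\in O(r,i)$. By Lemma~\ref{Lm:Omega} there are $m,n$ with $\omega^{m}(i,r)=\omega^{n}(r,i)=:(a,b)$, which provides directed $\omega$-paths from $(r,i)$ to $(a,b)$ and from $(i,r)$ to $(a,b)$.

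The key step is to push both paths down modulo $t$. Lemma~\ref{Lm:OrbMod} applies to the first path (the hypotheses that $t$ is odd, $t\mid r+i$, and $t$ does not divide $r$ all hold) and to the second (with $t\mid i+r$ and $t$ not dividing $i$), giving directed paths from $(r\bmod t,\,i)$ and from $(i,\,r\bmod t)$ into the common point $(a\bmod t,\,b\bmod t)$. Because the reduced sum $t$ is odd, the orbit $O(r\bmod t,\,i)$ is a single directed cycle by Theorem~\ref{Th:ShapeO}(ii); hence both reduced points lie in it, so $(i,\,r\bmod t)\in O(r\bmod t,\,i)$, meaning that $(r\bmod t,\,i)$ flips. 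This contradicts the previous paragraph, so $(r,i)$ does not flip. As $i$ was arbitrary, $r$ witnesses the claim.

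The main obstacle is the gap between the definition of flipping, which is the undirected orbit-membership condition $(s,r)\in O(r,s)$, and Lemma~\ref{Lm:OrbMod}, which only transports directed $\omega$-paths. I bridge it by using Lemma~\ref{Lm:Omega} to locate a common $\omega$-descendant $(a,b)$ of $(r,i)$ and $(i,r)$, reducing the two directed paths to $(a,b)$ separately, and then using that an odd-sum orbit is a single cycle to reassemble a genuine flip downstairs. The only routine verifications are the side conditions of Lemma~\ref{Lm:OrbMod} for both paths, which follow from $p_i>M$ and the fact that $p_i$ does not divide $i$, together with the coprimality of the reduced pair, which holds because its coordinates are positive and sum to the prime $t$.
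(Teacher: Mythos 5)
Your proof is correct and follows essentially the same route as the paper: Dirichlet's theorem supplies primes $p_i\equiv 7\pmod 8$, the Chinese Remainder Theorem arranges $p_i\mid r+i$, and Lemma~\ref{Lm:OrbMod} transports non-flipping of the reduced pair back up to $(r,i)$ (the paper reduces modulo the full odd part $b$ of $r+i$ and cites Theorem~\ref{Th:Primes}(iv), whereas you reduce modulo $p_i$ itself and cite Proposition~\ref{Pr:Aux}(ii), which amounts to the same thing). Your explicit bridging step --- using Lemma~\ref{Lm:Omega} to replace the undirected orbit condition $(i,r)\in O(r,i)$ by two directed $\omega$-paths into a common descendant before reducing modulo $t$ --- is a detail the paper leaves implicit, and it is a genuine improvement in rigor; reducing modulo the prime $p_i$ also makes coprimality of the reduced pair automatic, a hypothesis of Theorem~\ref{Th:Primes} that the paper does not explicitly verify.
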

\begin{proof}
By Theorem \ref{Th:Dirichlet}, there are infinitely many primes congruent to
$7$ modulo $8$. Let $p_1<p_2<\cdots<p_M$ be among such primes, and assume
further that $M<p_1$. Since $p_1$, $p_2$, $\dots$, $p_M$ are pairwise
relatively prime, there is a solution $r>0$ to the system of congruences
$r+s\equiv 0\ (\mathrm{mod}\ p_s)$, $1\le s\le M$. We claim that none of
$(r,1)$, $\dots$, $(r,M)$ flips.

Let $1\leq s\leq M$, $r+s=2^{a}b$, $b$ odd. Since $p_{s}$ is odd and divides
$r+s=2^{a}b$, $p_{s}$ must divide $b$. But $s\leq M<p_{1}\leq p_{s}$, and
hence $b$ cannot divide $s$. Thus $(r\ \mathrm{mod}\ b)+(s\ \mathrm{mod}\
b)=b$. By Theorem \ref{Th:Primes}(iv), $(r\ \mathrm{mod}\ b,s\ \mathrm{mod}\
b)$ does not flip. By Lemma \ref{Lm:OrbMod}, $(r,s)$ does not flip.
\end{proof}

\section{An infinite alternative loop without two-sided
inverses}\label{Sc:Loop}

It is essential to include the word ``finite'' in the statement of Problem
\ref{Pr:AL}, as there are infinite alternative loops without two-sided
inverses. The existence of such a loop was suggested by J.~D.~Phillips, and
it was constructed for the first time by Warren D.~Smith.

When our construction below is used with the parameters $S=\{a_0=1$, $a_1\}$,
it yields Smith's loop. Our contribution should be regarded as a
straightforward generalization of Smith's idea. We split it into several
steps:

Let $S_0$, $S_1$ be cyclic subgroups of an abelian group $S=(S,\cdot,1)$,
with generators $s_0$, $s_1$, respectively. Assume that $s_0\ne s_1$. Let
$x_0$, $x_1$ be symbols. Set $L=\{ax_i^n;\;a\in S$, $i\in\{0,1\}$,
$n\in\mathbb N\}$ and identify $x_i^0$ with $1$. Define multiplication
$\circ$ on $L$ by
\begin{equation}\label{Eq:ALT}
    ax_i^n\circ bx_j^m = \left\{\begin{array}{ll}
        (ab)x_i^{n+m},&i=j,\\ \\
        (abs_i^n)x_j^{m-n},&i\ne j,\,n\le m,\\ \\
        (abs_i^m)x_i^{n-m},&i\ne j,\,n\ge m.
    \end{array}\right.
\end{equation}
Note that the two bottom branches yield the same result when $n=m$, namely
$abs_i^n$.

\begin{lemma} $(L,\circ)$ defined by \eqref{Eq:ALT} satisfies:
\begin{enumerate}
\item[(i)] $L$ is closed under $\circ$,

\item[(ii)] $a\circ b = ab$ for $a$, $b\in S$, and thus $S\le L$,

\item[(iii)] $a\in S$ commutes and associates with every element of $L$.
\end{enumerate}
\end{lemma}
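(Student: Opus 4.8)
The plan is to dispatch (i) and (ii) by inspection and to reduce (iii) to a single ``scalar'' computation. For (i), I note that in each of the three branches of \eqref{Eq:ALT} the symbol $x_\ell^q$ is preceded by a product of elements of $S$ (namely $ab$, $abs_i^n$, or $abs_i^m$), which therefore lies in $S$, while the exponent ($n+m$, $m-n$, or $n-m$) is a nonnegative integer; hence every product $ax_i^n\circ bx_j^m$ is again an element of $L$, and $L$ is closed. For (ii), I write $a=ax_0^0$ and $b=bx_0^0$ and apply the first branch to obtain $a\circ b=(ab)x_0^0=ab$, so $\circ$ restricts to the group multiplication on $S$ and $S\le L$.

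For (iii) the key preliminary observation is that an element $a\in S$ acts as a scalar. Writing $a=ax_i^0$ for either choice of $i$ and using $s_i^0=1$, a direct check of the branches of \eqref{Eq:ALT} shows that for every $w=bx_j^m\in L$ one has
\begin{equation*}
a\circ w = w\circ a = (ab)x_j^m,
\end{equation*}
a value independent of the index $i$ used to represent $a$ (the only ambiguity in the representation is at exponent $0$, where $x_0^0=x_1^0=1$). I abbreviate this common value by $a\cdot w$. Since $S$ is abelian, $ab=ba$, and the displayed equality immediately gives that $a$ commutes with every $w\in L$.

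To show that $a$ associates with every pair $u,v\in L$, I will establish the single identity
\begin{equation*}
a\cdot(u\circ v) = (a\cdot u)\circ v = u\circ(a\cdot v)
\end{equation*}
by running through the three branches of \eqref{Eq:ALT} that can govern $u\circ v$. Writing $u=bx_j^m$, $v=cx_k^p$, I observe that the branch selection depends only on the indices $j,k$ and the exponents $m,p$, none of which is altered by multiplying a coefficient by $a$; and that inserting $a$ into the $S$-coefficient (of $u$, of $v$, or of $u\circ v$) produces the same coefficient in all three expressions and the same symbol $x_\ell^q$. Combining this identity with $a\circ w=w\circ a=a\cdot w$ then yields the three nucleus relations $a\circ(u\circ v)=(a\circ u)\circ v$, $u\circ(a\circ v)=(u\circ a)\circ v$, and $u\circ(v\circ a)=(u\circ v)\circ a$, completing (iii).

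The computations are entirely routine; the only point demanding care is the bookkeeping of the twist factors $s_i^n$, $s_j^m$ across the three branches, together with the verification that representing $a\in S$ as $ax_0^0$ or as $ax_1^0$ gives the same product. Once the scalar identity above is checked branch by branch, no further difficulty remains.
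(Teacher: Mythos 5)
Your proof is correct and follows essentially the same route as the paper's: parts (i) and (ii) by inspection, and part (iii) by a branch-by-branch check of \eqref{Eq:ALT}. The only difference is organizational --- you factor the computation through the ``scalar action'' identity $a\circ w=w\circ a=(ab)x_j^m$ and the three-way equality $a\cdot(u\circ v)=(a\cdot u)\circ v=u\circ(a\cdot v)$, which tidily yields all three nucleus relations at once (the paper verifies the left and right associativity identities directly and leaves the middle one to follow from commutativity).
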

\begin{proof}
Parts (i), (ii) are straightforward. We have $a\circ bx_j^m = (ab)x_j^m =
(ba)x_j^m = bx_j^m\circ a$ for $a$, $b\in S$, $m\in\mathbb N$. To show that
$a\in S$ associates with all elements of $L$, it suffices to show that
$(a\circ bx_i^n)\circ cx_j^m = a\circ (bx_i^n\circ cx_j^m)$ and $(bx_i^n\circ
cx_j^m)\circ a = bx_i^n\circ (cx_j^m\circ a)$ for every $b$, $c\in S$, $n$,
$m\in\mathbb N$. Assume $n\le m$. Then $(a\circ bx_i^n)\circ cx_j^m =
(ab)x_i^n\circ cx_j^m = (abcs_i^n)x_j^{m-n} = a \circ (bcs_i^n)x_j^{m-n} =
a\circ (bx_i^n\circ cx_j^m)$. Assume $n\ge m$. Then $(a\circ bx_i^n)\circ
cx_j^m = (ab)x_i^n\circ cx_j^m = (abcs_i^m)x_i^{n-m} = a\circ
(bcs_i^m)x_i^{n-m} = a\circ (bx_i^n\circ cx_j^m)$. The other equality is
proved similarly.
\end{proof}

\begin{lemma} $(L,\circ)$ is a loop without two-sided inverses.
\end{lemma}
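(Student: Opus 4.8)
The plan is to verify the two defining properties of a loop---a two-sided neutral element and unique solvability of $a\circ b=c$ for the missing variable---and then to single out one element whose left and right inverses differ. Closure of $\circ$ is already given by the preceding lemma. First I would check that $1\in S$ is neutral: writing $1=1\cdot x_j^0$ and using the top branch of \eqref{Eq:ALT} gives $1\circ bx_j^m=bx_j^m$ and $ax_i^n\circ 1=ax_i^n$. Since a magma with a two-sided identity in which all left and right translations are bijections is a loop, it then suffices to show that each left translation $L_g\colon h\mapsto g\circ h$ and each right translation $R_g\colon h\mapsto h\circ g$ is a bijection of $L$. When $g=a\in S$ this is immediate, as \eqref{Eq:ALT} gives $a\circ bx_j^m=(ab)x_j^m$ and $bx_j^m\circ a=(ab)x_j^m$, and multiplication by $a$ is a bijection of $S$ with inverse multiplication by $a^{-1}$. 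The content therefore lies in the case $g=ax_i^n$ with $n\ge 1$.

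For such $g$ (say $i=0$; the case $i=1$ is symmetric) I would partition the domain into the index-$0$ arguments $\{bx_0^m:m\ge 0\}$, which include $S$ at $m=0$, and the index-$1$ arguments $\{bx_1^m:m\ge 1\}$, splitting the latter at the threshold $m=n$. Reading off \eqref{Eq:ALT}, the index-$0$ arguments map via the top branch to $\{cx_0^k:k\ge n\}$; the index-$1$ arguments with $1\le m\le n-1$ map via the bottom branch to $\{cx_0^k:1\le k\le n-1\}$; the single value $m=n$ maps into $S$; and the index-$1$ arguments with $m\ge n+1$ map via the middle branch to $\{cx_1^k:k\ge 1\}$. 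In each range the free factor $b$ runs bijectively over $S$, so these four image ranges are pairwise disjoint and together exhaust $L$; hence $L_g$ is a bijection. The computation for $R_g$ is entirely analogous, with the threshold and the surviving index playing the same roles. This establishes that $(L,\circ)$ is a loop.

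Finally I would exhibit an element with no two-sided inverse. Writing $x_0=1\cdot x_0^1$ and solving $x_0\circ y=1$ in \eqref{Eq:ALT} forces $y=s_0^{-1}x_1$, while solving $z\circ x_0=1$ forces $z=s_1^{-1}x_1$. Since $s_0\ne s_1$ by hypothesis, the (unique) left and right inverses of $x_0$ are distinct, so $x_0$ has no two-sided inverse and $(L,\circ)$ is the desired loop.

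The main obstacle is the bijectivity bookkeeping for $L_g$ and $R_g$. Because the product of an $x_i$-element with an $x_j$-element for $i\ne j$ keeps the index of whichever factor carries the larger exponent, the exponent $n$ of $g$ acts as a threshold that cuts the space of arguments into several ranges, and one must verify that the corresponding image ranges tile $L$ with no overlaps or gaps. A small but genuine point is that $bx_0^0=bx_1^0=b$ carries no intrinsic index, so one must confirm that the two applicable branches of \eqref{Eq:ALT} agree on such arguments. It is worth noting that in every product the twisting factor is a power of $s_i$, where $i$ is the index of the \emph{left} factor; this left/right asymmetry is exactly the source of the two distinct one-sided inverses of $x_0$.
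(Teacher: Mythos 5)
Your proposal is correct and amounts to the same verification as the paper's: the paper proves the loop property by writing down the unique solution of $x\circ y=z$ (resp.\ $z\circ x=y$) in each branch of \eqref{Eq:ALT}, which is exactly your bijectivity of the translations $L_g$, $R_g$, just organized as an explicit inverse rather than as a tiling of the codomain by the four image ranges; the failure of two-sided inverses is shown by the identical computation $x_0\circ s_0^{-1}x_1=1=s_1^{-1}x_1\circ x_0$ with $s_0\ne s_1$. Your side remarks (agreement of the branches on exponent-$0$ elements, and the left/right asymmetry of the twisting factor $s_i$) are accurate and correspond to observations the paper makes around the definition of $\circ$.
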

\begin{proof}
We need to show that $x\circ y=z$ has a unique solution in $L$ whenever two
of the elements $x$, $y$, $z$ are given. We prove this when $x$ and $z$ are
given, the other case being analogous.

Let $x=ax_i^n$, $z=bx_j^m$. When $i=j$ and $n\le m$, we have $x\circ
(a^{-1}b)x_i^{m-n} = z$. When $i=j$ and $n\ge m$, we let $k\ne i$ and have
$ax_i^n \circ (a^{-1}bs_i^{-(n-m)})x_k^{n-m} =
(aa^{-1}bs_i^{-(n-m)}s_i^{n-m})x_i^{n-(n-m)} = z$. When $i\ne j$, we have
$ax_i^n \circ (a^{-1}bs_i^{-n})x_j^{m+n} =
(aa^{-1}bs_i^{-n}s_i^n)x_j^{(m+n)-n} = z$. It is not hard to see that the
above solutions are unique in all cases.

Now, $x_0\circ s_0^{-1}x_1 = s_0^{-1}s_0=1$ and $s_1^{-1}x_1\circ x_0 =
s_1^{-1}s_1=1$ together with $s_0\ne s_1$ implies that $x_0\in L$ does not
have a two-sided inverse.
\end{proof}

\begin{theorem} $(L,\circ)$ is an alternative loop without two-sided
inverses.
\end{theorem}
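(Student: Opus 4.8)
The plan is to build on the two preceding lemmas, which already establish that $(L,\circ)$ is a loop without two-sided inverses (with neutral element $1\in S$) and that every element of $S$ commutes and associates with every element of $L$. Consequently the only thing left to prove is that $(L,\circ)$ satisfies the left alternative law $x\circ(x\circ y)=(x\circ x)\circ y$ and the right alternative law $x\circ(y\circ y)=(x\circ y)\circ y$ for all $x,y\in L$. I would verify both identities directly from the defining formula \eqref{Eq:ALT}.

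The first step is a reduction that strips the coefficients from $S$. Writing a general pair of elements as $p=ax_i^n$ and $q=bx_j^m$, one reads off each of the three branches of \eqref{Eq:ALT} that $p\circ q=ab\,(x_i^n\circ x_j^m)$, where $x_i^n\circ x_j^m$ denotes the product of the pure generators; that is, the $S$-coefficient factors as $ab$ times a quantity depending only on $i$, $n$, $m$. Since elements of $S$ commute and associate with everything (part (iii) of the first lemma above), iterating this observation shows that both sides of the left alternative law carry the common prefactor $a^2b$, and both sides of the right alternative law carry the common prefactor $ab^2$. Hence it suffices to verify both laws for the pure generators $x_i^n$ and $x_j^m$, i.e.\ for $a=b=1$.

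With this reduction in hand, the remaining verification is a finite case analysis. When $i=j$ the products collapse to the monoid rule $x_i^n\circ x_i^m=x_i^{n+m}$, so both laws are immediate. When $i\ne j$ I would split according to the orderings of the exponents: for the left law the decisive comparison is between $m$ and $2n$, and for the right law between $n$ and $2m$. In each branch one expands the inner product first and then the outer one, tracking the accumulated powers of $s_i$ (resp.\ $s_j$) that \eqref{Eq:ALT} produces, and checks that the two sides agree. The boundary cases (such as $n=m$) cause no trouble, because, as noted after \eqref{Eq:ALT}, the two bottom branches of the rule coincide there.

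I expect the only real obstacle to be the bookkeeping in the $i\ne j$ case. The subtlety is that the index can \emph{flip}: in $x_i^n\circ(x_i^n\circ x_j^m)$ the inner product $x_i^n\circ x_j^m$ returns to index $i$ once $n\ge m$, which switches the applicable branch of \eqref{Eq:ALT} in the outer multiplication. One must confirm that this flipped branch still reproduces the value of $(x_i^n\circ x_i^n)\circ x_j^m$, and in particular that the exponent of $s_i$ ends up the same on both sides. Organizing the cases by the single inequality $2n\le m$ versus $2n\ge m$ (and dually $2m\le n$ versus $2m\ge n$ for the right law) reduces every subcase to a short comparison of exponents, so no genuinely hard step remains once the coefficient reduction of the second paragraph has been carried out.
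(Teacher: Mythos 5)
Your proposal is correct and follows essentially the same route as the paper: invoke the two preceding lemmas for the loop structure and the absence of two-sided inverses, then verify the alternative laws directly from \eqref{Eq:ALT} by the same case analysis (trivial when $i=j$; splitting on $2n$ versus $m$, with the inner product's branch determined by $n$ versus $m$, when $i\ne j$). Your preliminary step of factoring out the $S$-coefficients via part (iii) of the first lemma is a mild tidying of the bookkeeping rather than a different argument --- the paper simply carries the prefactor $a^2b$ through the same computation.
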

\begin{proof}
It remains to show that the alternative laws hold in $L$. We only prove the
left alternative law, the right alternative law being analogous.

Let $a$, $b\in S$, $n$, $m\in\mathbb N$, $i\ne j$. Then
\begin{displaymath}
    (ax_i^n\circ ax_i^n)\circ bx_j^m = a^2x_i^{2n}\circ bx_j^m = \left\{
    \begin{array}{ll}
        (a^2bs_i^{2n})x_j^{m-2n},& 2n\le m,\\ \\
        (a^2bs_i^m)x_i^{2n-m},& 2n\ge m.
    \end{array}\right.
\end{displaymath}
On the other hand, when $n\le m$ we have
\begin{displaymath}
    ax_i^n\circ (ax_i^n\circ bx_j^m) = ax_i^n\circ(abs_i^n)x_j^{m-n} = \left\{
    \begin{array}{ll}
        (a^2bs_i^ns_i^n)x_j^{m-2n},&2n\le m,\\ \\
        (a^2bs_i^ns_i^{m-n})x_i^{2n-m},&2n\ge m,
    \end{array}\right.
\end{displaymath}
and when $n\ge m$ we have
\begin{displaymath}
    ax_i^n\circ (ax_i^n\circ bx_j^m) = ax_i^n\circ (abs_i^m)x_i^{n-m} =
    (a^2bs_i^m)x_i^{2n-m}.
\end{displaymath}
Careful comparison of cases then shows that the left alternative law holds.

When $i=j$, the left alternative law obviously holds.
\end{proof}

Note that all powers $x^n$ with $n>0$ are well defined in the loop
(\ref{Eq:ALT}).

\section{Acknowledgement}

The second author worked briefly on Problem \ref{Pr:AL} with J.~D.~Phillips and
Warren D.~Smith. Consequent work of Smith resulted in an unpublished manuscript
\cite{Smith}. The maps $\alpha$, $\beta$, $\gamma$, $\delta$ of Section
\ref{Sc:Dyn} are discussed in \cite{Smith}, and results concerning mirrorable
integers and primes are obtained there. (An integer $n$ is \emph{mirrorable} if
the left power $x^{(n)}$ is equal to the right power $x^{[n]}$. Therefore, if
$n$ is mirrorable then $(1,n-1)$ flips, but not necessarily vice versa.) All
results of Sections \ref{Sc:Orbits}, \ref{Sc:Powers} and \ref{Sc:Flips} are
new, to our knowledge.


\bibliographystyle{plain}

\end{document}